\numberwithin{equation}{section}
\newtheorem{thm}{Theorem}[section]
\newtheorem{lma}[thm]{Lemma}
\newtheorem{cor}[thm]{Corollary}
\newtheorem{defn}[thm]{Definition}
\newtheorem{prop}[thm]{Proposition}
\renewcommand{\epsilon}{\varepsilon}
\newcommand{\eps}{\varepsilon}
\newcommand{\rr}{\mathbb{R}^2}
\newcommand{\rd}{\mathbb{R}^n}
\renewcommand{\geq}{\geqslant}
\renewcommand{\leq}{\leqslant}
\newcommand{\ubd}{\overline{\dim}_{\textup{B}}}
\newcommand{\ad}{\dim_{\mathrm{A}} }
\newcommand{\hd}{\dim_{\mathrm{H}}  }
\newcommand{\pd}{\dim_{\mathrm{P}}  }
\title{ \vspace{-20mm} A nonlinear projection theorem for Assouad dimension \\ and applications}
\author{Jonathan M. Fraser}
\begin{document}


\maketitle

\begin{abstract}
We prove a general nonlinear projection theorem for Assouad dimension.  This theorem has several applications including to distance sets,  radial projections, and sum-product phenomena.  In the setting of distance sets we are able to completely resolve the planar distance set problem for Assouad dimension, both dealing with the awkward  `critical case' and providing sharp estimates for sets with Assouad dimension less than 1.   In the higher dimensional setting we connect the problem to the dimension of the set of exceptions in a related (orthogonal) projection theorem. We also obtain results on pinned distance sets and our results still hold when the distances are taken with respect to a sufficiently curved norm.    As another application we prove a  radial projection theorem for Assouad dimension with sharp estimates on the Hausdorff dimension of the exceptional set.
\\ \\ 
\emph{Mathematics Subject Classification} 2010: primary: 28A80; secondary:   28A78.
\\
\emph{Key words and phrases}: Assouad dimension, nonlinear projections, distance sets, radial projections, exceptional set, Hausdorff dimension, sum-product theorem.
\end{abstract}

\section{Introduction}

How dimension behaves under projection is a well-studied and important problem in geometric measure theory with many varied applications.  The classical setting is to relate  the Hausdorff dimension of  a set $F \subseteq \rd$ with the Hausdorff dimension of $\pi_V(F)$ for generic $V \in G(n,m)$.  Here and throughout $G(n,m)$  denotes the Grassmannian manifold consisting of $m$-dimensional subspaces of $\rd$ and $\pi_V$ denotes orthogonal projection from $\rd$ to $V \in G(n,m)$.  We write $\hd E$ for the Hausdorff dimension of a set $E$.  The seminal Marstrand-Mattila projection theorem states that for Borel sets $F \subseteq \rd$
\begin{equation} \label{mmproj}
\hd \pi_V (F) = \min\{\hd F, m\}
\end{equation}
for almost all $V \in G(n,m)$.  Here `almost all' is with respect to the Grassmannian measure, which is the appropriate analogue of $m(n-m)$-dimensional Lebesgue measure on $G(n,m)$.   The planar case of this result goes back to Marstrand's 1954 paper \cite{marstrand} and the general case was proved by Mattila \cite{mattilaproj}.  This result has inspired much work in geometric measure theory, fractal geometry, harmonic analysis, ergodic theory and many other areas.

This paper is concerned with the Assouad dimension, which is  a well-studied notion of dimension with key applications in embedding theory, quasi-conformal geometry and fractal geometry.  The analogue of the Marstrand-Mattila projection theorem for Assouad dimension was proved in \cite[Theorem 2.9]{fraserdist}, the planar case having been previously established by Fraser and Orponen \cite{FraserOrponen}.  We write $\ad E$ for the Assouad  dimension of a set $E$. The result is that for any non-empty set $F \subseteq \rd$
\begin{equation} \label{assouadprojection}
\ad \pi_V( F) \geq  \min\{\ad F, m\}
\end{equation}
for almost all $V \in G(n,m)$.  An interesting feature of this result is that the inequality cannot be replaced by equality in general.  This latter fact  was proved in \cite{FraserOrponen} and  in \cite{kaenmakiproj} it was proved that, apart from satisfying \eqref{assouadprojection} almost surely, the behaviour of $\ad \pi_V( F)$ can be very wild.  Our projection theorems will share this phenomenon and we make no further mention of this.  

We are concerned with parameterised families of \emph{nonlinear} projections, rather than the orthogonal projections $\pi_V$.  Our treatment and exposition   takes some inspiration from the nonlinear projection theorems of Peres and Schlag \cite{peresschlag}, which are primarily in the setting of Hausdorff dimension of sets and measures.   The work of Peres and Schlag has proved influential, with the concept of transversality at the centre.  Their general nonlinear projection theorems have applications in several  areas including radial projections, distance sets, Bernoulli convolutions, sumsets, and many other `nonlinear' problems.  Our main result, Theorem \ref{thmmain}, is a general nonlinear projection theorem for Assouad dimension, and this too has many applications.  Most strikingly to distance sets, where we are able to  completely resolve the planar distance set problem for Assouad dimension, see Theorem \ref{distmain0}. Specifically, we prove  that the Assouad dimension of the distance set of a set $F$ in the plane is at least $\min\{\ad F, 1\}$.  In the higher dimensional setting we connect the problem to the dimension of the set of exceptions to \eqref{assouadprojection}, see Theorem \ref{distmain}.  We also obtain results for  pinned distance sets and for distance sets where the distances are taken with respect to a `sufficiently curved' norm.  Our proofs use tools from geometric measure theory, such as the theory of weak tangents \cite{mackaytyson, anti3}; fractal geometry, such as Orponen's projection theorem for Assouad dimension \cite{orponenassouad} and transversality; and also  differential geometry, with linearisation the underlying principle.

 For background on fractal geometry, including Hausdorff dimension and the dimension theory of projections, see the books \cite{falconer,mattila} and the recent survey articles on projections \cite{FalconerFraserJin, MattilaSurvey}. For background on the Assouad dimension, see the books \cite{fraserbook,robinson}, and for recent results on the Assouad dimension of orthogonal projections, see \cite{fraserdist, kaenmakiproj,FraserOrponen,orponenassouad}.   There has recently been intensive interest in nonlinear projections in a variety of contexts.  For example, see \cite{barany, zahl, hochmanshmerkin, pablo3}. 

For concreteness we recall the definition of the Assouad dimension, although we will not use the definition directly.   Given $F \subseteq \rd$, the Assouad dimension of $F$ is defined to be the infimum of $\alpha \geq 0$ for which there is a constant $C>0$ such that, for all $x \in F$ and scales $0<r<R$, the intersection of $F$ with the ball $B(x,R)$ may be covered by fewer than $C(R/r)^\alpha$ sets of diameter $r$.  In particular, $0 \leq \hd F \leq \ad F \leq n$.

\section{A  nonlinear projection theorem for Assouad dimension}

Our main result is a general nonlinear projection theorem for  Assouad dimension.  The nonlinear projections we consider are defined in Definition \ref{defproj}.  The definition  may seem technical, but in the applications which follow it will be obvious that these conditions are satisfied.

\begin{defn} \label{defproj}
We call $(\{\Pi_t : t \in \Omega\}, \mu, \mathbb{P})$  a \emph{generalised family of projections of $\rd$ of rank $m \geq 1$} if $\Omega$ is a metric space, $\mu$ a Borel measure on $\Omega$, $\mathbb{P}$ a Borel measure on $G(n,m)$   and:
\begin{enumerate}
\item (Domain) For all $ t \in \Omega$, $\Pi_t$ is a function  mapping $\rd$ into itself.
\item (Differentiability) For all $ z \in \rd$,   $\Pi_t$ is  a  $C^1$ map of constant rank $m$ in some  open  neighbourhood  of $z$ for $\mu$ almost all $t \in \Omega$.  That is, for $\mu$ almost all $t$,   $\Pi_t$ is  continuously differentiable in a neighbourhood of $z$ and the Jacobian $J_{z'} \Pi_t$ is a rank $m$ matrix for all $z'$ sufficiently close to $z$.

In particular, this means that for all $z \in \rd$ the map $T_z:\Omega \to G(n,m)$ given by  $T_z(t) = \textup{ker}(J_z \Pi_t)^\perp$ is  well-defined almost everywhere  (using the rank nullity theorem).
\item (Absolute continuity) For all $z \in \rd$, $\mu \circ T_z^{-1} \ll \mathbb{P}$.
\end{enumerate}
\end{defn}

 Note that the Jacobian derivatives $J_z\Pi_t$ appearing in Definition \ref{defproj} need not be  projection matrices. In most applications, for all $z$,  $\Pi_t$ will be smooth in a neighbourhood of $z$ for all but at most one point $t \in \Omega$.  One can think of the absolute continuity assumption in terms of transversality of the family $\{\Pi_t\}_t$.

\begin{thm}\label{thmmain}
Let $(\{\Pi_t : t \in \Omega\}, \mu, \mathbb{P})$ denote a generalised family of projections of $\rd$ of rank $m \geq 1$. For all non-empty bounded  $F \subseteq \rd$,
\[
\ad \Pi_t (F) \geq  \inf_{\substack{E \subseteq \rd :\\ \hd E = \ad F}}
    \underset{V \sim \mathbb{P} }{\textup{essinf}} \  \ad \pi_V (E) 
\]
for $\mu$ almost all $t \in \Omega$.
\end{thm}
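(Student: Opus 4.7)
The plan is to combine the existence of a weak tangent realising the Assouad dimension with a Taylor linearisation of $\Pi_t$ at the basepoint of that tangent, thereby reducing the nonlinear problem to an orthogonal projection question which is absorbed by the absolute continuity hypothesis. Without loss of generality replace $F$ by $\overline{F}$, which changes neither boundedness nor $\ad F$. By the existence theorem for weak tangents attaining the Assouad dimension (see \cite{mackaytyson, anti3}), there exist a point $z \in F$, a radius $R>0$, scales $r_k \downarrow 0$ and a non-empty compact set $E \subseteq B(0,R)$ with $\hd E = \ad F$ such that the rescaled sets
\[
F_k := \bigl((F - z)/r_k\bigr) \cap B(0,R)
\]
converge to $E$ in the Hausdorff metric. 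The crucial feature is that $z + r_k F_k \subseteq F$, so information about $E$ propagates to $F$ and hence to $\Pi_t(F)$.

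The differentiability hypothesis supplies a $\mu$-null set $N_0 \subseteq \Omega$ such that for all $t \notin N_0$, $\Pi_t$ is $C^1$ in an open neighbourhood of $z$ and $L_t := J_z \Pi_t$ has rank $m$. For such $t$, a first-order Taylor expansion gives
\[
\sup_{y \in B(0,R)} \left\lvert \frac{\Pi_t(z + r_k y) - \Pi_t(z)}{r_k} - L_t y \right\rvert \longrightarrow 0 \quad (k \to \infty).
\]
Combined with $F_k \to E$, this forces $r_k^{-1}\bigl(\Pi_t(z + r_k F_k) - \Pi_t(z)\bigr) \to L_t(E)$ in the Hausdorff metric. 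Since $z + r_k F_k \subseteq F$, the set $L_t(E)$ is a weak tangent of $\Pi_t(F)$ at $\Pi_t(z)$, so by the Mackay-Tyson monotonicity of Assouad dimension on weak tangents \cite{mackaytyson},
\[
\ad \Pi_t(F) \;\geq\; \ad L_t(E).
\]

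The rank-$m$ linear map $L_t$ factors as an invertible linear map (from $V_t := (\ker L_t)^\perp$ onto its image) composed with the orthogonal projection $\pi_{V_t}$, and by definition $V_t = T_z(t)$. Since invertible linear maps preserve Assouad dimension, $\ad L_t(E) = \ad \pi_{T_z(t)}(E)$. Writing $s := \textup{essinf}_{V \sim \mathbb{P}} \ad \pi_V(E)$, the set $\{V \in G(n,m) : \ad \pi_V(E) < s\}$ is $\mathbb{P}$-null, so by the absolute continuity hypothesis $\mu \circ T_z^{-1} \ll \mathbb{P}$ its $T_z$-preimage is $\mu$-null. Hence for $\mu$-a.e.\ $t \in \Omega$,
\[
\ad \Pi_t(F) \;\geq\; \ad \pi_{T_z(t)}(E) \;\geq\; s \;\geq\; \inf_{\substack{E' \subseteq \rd :\\ \hd E' = \ad F}} \underset{V \sim \mathbb{P}}{\textup{essinf}} \ad \pi_V(E'),
\]
the last step using that $E$ is one admissible choice of $E'$ in the infimum.

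The main technical hurdle is the linearisation step, where one must verify that the uniform Taylor approximation genuinely produces a \emph{weak tangent of $\Pi_t(F)$}, rather than merely $\Pi_t$ applied to a weak tangent of $F$; this is precisely where the local $C^1$ and rank conditions of Definition \ref{defproj} are deployed. The remaining ingredients are comparatively soft: the factoring of $L_t$ through $\pi_{V_t}$ is elementary linear algebra, and the absolute continuity hypothesis performs exactly the task of converting a $\mathbb{P}$-almost-sure statement about orthogonal projections into a $\mu$-almost-sure statement about the nonlinear family $\Pi_t$.
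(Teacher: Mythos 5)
Your proof is correct and shares the paper's architecture---pass to a tangent $E$ realising the Assouad dimension, relate a suitable projection of $E$ to $\Pi_t(F)$, then invoke absolute continuity---but you take a genuinely simpler route through the technical heart of the argument. The paper's Proposition \ref{keylemma} establishes the analogous claim (that $\pi_V(E)$ is a tangent to $\Pi_t(F)$) by constructing auxiliary maps $U_k^0$ via the implicit function theorem, tracking level sets of $\Pi_t$ and verifying bi-Lipschitz distortion bounds in Lemma \ref{uk}; you instead apply a uniform first-order Taylor expansion directly to the rescaled configuration, get Hausdorff convergence of $\phi_k(F_k)$ to $L_t(E)$ with $L_t = J_z\Pi_t$, and then observe that $L_t$ factors through $\pi_{V_t}$ by an invertible linear map so $\ad L_t(E) = \ad \pi_{V_t}(E)$. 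This is shorter and avoids the $U_k^0$ machinery altogether; your blow-up maps are plain homotheties, so the distortion condition in Definition \ref{weaktangentdef} is trivial. Two small imprecisions are worth flagging, though neither is fatal. First, what you construct is a \emph{pseudo} tangent in the sense of the paper's Definition \ref{weaktangentdef}: the inclusion $\Pi_t(z+r_kF_k) \subseteq \Pi_t(F)$ gives only one-sided Hausdorff closeness $\rho_\mathcal{H}(L_t(E), S_k(\Pi_t(F))) \to 0$, not full $d_\mathcal{H}$-convergence of the blown-up set, because $\Pi_t(F)$ may have points near $\Pi_t(z)$ coming from parts of $F$ far from $z$. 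So you should cite the paper's Theorem \ref{weaktangent} (the pseudo-tangent variant) rather than the raw Mackay--Tyson proposition, or pass to a Hausdorff-convergent subsequence and use monotonicity on subsets. Second, the existence theorem (Theorem \ref{goodweaktangent}) gives a simple tangent with blow-up centres $z_k \to z$ that need not all equal $z$, so the rescaled sets should really be $((F-z_k)/r_k) \cap B(0,R)$; the Taylor expansion then involves $J_{z_k}\Pi_t$, which converges to $J_z\Pi_t$ by the local $C^1$ hypothesis, so the argument still closes, but as written your description of $F_k$ is slightly more restrictive than what the existence theorem actually delivers. With those two points tightened, your proof is a clean alternative to the paper's.
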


 We chose to use general Borel measures $\mathbb{P}$ on $G(n,m)$ rather than the usual Grassmannian measure because this allows us to deduce dimension estimates for the exceptional set.  However, the most direct application of Theorem \ref{thmmain} is when $\mathbb{P}$ is the Grassmannian measure.

\begin{cor}\label{thmmaincor}
Let $(\{\Pi_t : t \in \Omega\}, \mu, \mathbb{P})$ denote a generalised family of projections of $\rd$ of rank $m \geq 1$, where $\mathbb{P}$ is the Grassmannian measure. For all non-empty bounded  $F \subseteq \rd$,
\[
\ad \Pi_t (F) \geq \min\{\ad F, \, m\}
\]
for $\mu$ almost all $t \in \Omega$.
\end{cor}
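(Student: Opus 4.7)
The plan is to deduce this corollary by a direct application of Theorem \ref{thmmain}, whose conclusion is expressed as an infimum over auxiliary sets $E$ of the essential infimum of $\ad \pi_V(E)$. Since $\mathbb{P}$ is the Grassmannian measure here, the inner essential infimum can be controlled by the already-known Assouad projection theorem \eqref{assouadprojection}, which applies to every set in $\rd$.

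Concretely, I would fix an arbitrary set $E \subseteq \rd$ with $\hd E = \ad F$. The Assouad projection theorem \eqref{assouadprojection} applied to $E$ gives
\[
\ad \pi_V(E) \geq \min\{\ad E, m\}
\]
for $\mathbb{P}$ almost every $V \in G(n,m)$. The key observation, which is also the only non-trivial step, is that Hausdorff dimension is always dominated by Assouad dimension, so
\[
\ad E \geq \hd E = \ad F,
\]
and hence $\min\{\ad E, m\} \geq \min\{\ad F, m\}$. Passing to the essential infimum over $V \sim \mathbb{P}$, we conclude that
\[
\underset{V \sim \mathbb{P}}{\textup{essinf}} \ \ad \pi_V(E) \geq \min\{\ad F, m\}
\]
for every admissible $E$, so taking the infimum over $E$ preserves the bound.

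Finally, feeding this bound into Theorem \ref{thmmain} yields
\[
\ad \Pi_t(F) \geq \inf_{\substack{E \subseteq \rd :\\ \hd E = \ad F}} \ \underset{V \sim \mathbb{P}}{\textup{essinf}} \  \ad \pi_V(E) \geq \min\{\ad F, m\}
\]
for $\mu$ almost every $t \in \Omega$, as required. There is no real obstacle here beyond recognising that the universality of \eqref{assouadprojection} over all sets $E$, together with $\hd \leq \ad$, makes the auxiliary infimum over $E$ in Theorem \ref{thmmain} harmless in the Grassmannian case; the delicate content of the corollary is entirely absorbed into the statement of Theorem \ref{thmmain} itself.
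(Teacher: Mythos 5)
Your proof is correct and takes exactly the same route as the paper, which simply states that the corollary follows from Theorem~\ref{thmmain} and~\eqref{assouadprojection}; your write-up just makes explicit the intermediate step that $\ad E \geq \hd E = \ad F$ forces $\min\{\ad E, m\} \geq \min\{\ad F, m\}$ uniformly over admissible $E$.
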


\begin{proof}
This follows from Theorem \ref{thmmain} and  \eqref{assouadprojection}.
\end{proof}


It is also of interest to investigate the exceptional set in Corollary \ref{thmmaincor}. Theorem \ref{thmmain} also allows one to obtain estimates on the Hausdorff dimension of the exceptional set by relating it to the Hausdorff dimension of the exceptional set in the setting of orthogonal projections.  We write $\mathcal{H}^s$ for the $s$-dimensional Hausdorff (outer) measure.  
 
\newpage

\begin{cor}\label{thmmain2}
Suppose $(\{\Pi_t : t \in \Omega\}, \mathcal{H}^s, \mathcal{H}^{u})$ is a generalised family of projections of $\rd$ of rank $m \geq 1$ for all
\[
u>\sup \hd \{ V \in G(n,m) : \ad \pi_V (E) < \lambda\}
\]
where the supremum is taken over all non-empty $E \subseteq \rd$ with $\hd E = \ad F$.  For all non-empty bounded  $F \subseteq \rd$,
\[
\ad \Pi_t (F) \geq \lambda
\]
for  all  $t \in \Omega$ outside of a set of exceptions of Hausdorff dimension at most $s$.
\end{cor}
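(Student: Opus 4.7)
The plan is to deduce Corollary~\ref{thmmain2} as a direct bookkeeping consequence of Theorem~\ref{thmmain}: apply it with $\mathbb{P} = \mathcal{H}^u$ for some carefully chosen $u$, and exploit the hypothesis to ensure that the essential infimum on the right-hand side of Theorem~\ref{thmmain} is at least $\lambda$. Concretely, write
\[
\sigma := \sup \bigl\{ \hd\{V \in G(n,m) : \ad \pi_V(E) < \lambda \} \, : \, E \subseteq \rd,\ E \neq \emptyset,\ \hd E = \ad F \bigr\}
\]
and fix any $u > \sigma$. By hypothesis, $(\{\Pi_t\}, \mathcal{H}^s, \mathcal{H}^u)$ is a generalised family of projections, so Theorem~\ref{thmmain} applies and gives
\[
\ad \Pi_t(F) \geq \inf_{\substack{E \subseteq \rd : \\ \hd E = \ad F}} \underset{V \sim \mathcal{H}^u}{\textup{essinf}} \ \ad \pi_V(E)
\]
for $\mathcal{H}^s$-almost every $t \in \Omega$.

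Next I would argue that the right-hand side of this inequality is at least $\lambda$. For any admissible $E$, the set $\{V : \ad \pi_V(E) < \lambda\}$ has Hausdorff dimension at most $\sigma$, which is strictly less than $u$, so it has zero $\mathcal{H}^u$-measure. By definition of the essential infimum, this forces $\textup{essinf}_{V \sim \mathcal{H}^u} \ad \pi_V(E) \geq \lambda$ for every admissible $E$, and taking the infimum over $E$ preserves the lower bound. Hence $\ad \Pi_t(F) \geq \lambda$ for every $t$ outside of a set $\mathcal{E} \subseteq \Omega$ of $\mathcal{H}^s$-measure zero, and the standard fact that $\mathcal{H}^s(\mathcal{E}) = 0$ implies $\hd \mathcal{E} \leq s$ finishes the argument.

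There is essentially no serious obstacle here; the corollary is a clean repackaging of Theorem~\ref{thmmain}. The only subtlety is that the hypothesis is only guaranteed for $u > \sigma$ rather than at $\sigma$ itself, but because the conclusion $\ad \Pi_t(F) \geq \lambda$ makes no reference to $u$, a single choice $u > \sigma$ suffices and the resulting bound on the exceptional set passes through unchanged.
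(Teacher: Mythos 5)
Your proof is correct and takes essentially the same route as the paper's one-line proof: apply Theorem~\ref{thmmain} with $\mathbb{P} = \mathcal{H}^u$ for a fixed $u$ strictly exceeding the supremum, observe that the hypothesis forces $\textup{essinf}_{V \sim \mathcal{H}^u} \ad \pi_V(E) \geq \lambda$ for every admissible $E$, and translate $\mathcal{H}^s$-null exceptional set into Hausdorff dimension at most $s$. You have simply spelled out the measure-theoretic bookkeeping that the paper leaves implicit.
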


\begin{proof}
This follows from Theorem \ref{thmmain} since,  for all $E \subseteq \rd$ with $\hd E = \ad F$, $\underset{V \sim \mathcal{H}^u}{\textup{essinf}} \  \ad \pi_V (E) \geq \lambda.$
\end{proof}

When applying Corollary \ref{thmmain2} it is useful to be able to estimate
\begin{equation} \label{eta}
\theta(s,n,m):= \sup \hd \{ V \in G(n,m) : \ad \pi_V F < \min\{\ad F, m\}\}
\end{equation}
where the supremum is taken over all sets $F \subseteq \rd$ with $\ad F = s$.  It was proved in \cite{fraserdist} that, for all integers $n>m\geq 1$ and $s \in [0,n]$,
\begin{equation} \label{etaest}
\theta(s,n,m) \leq m(n-m) - |m-s|.
\end{equation}
These bounds are simply the known (sharp) bounds for the set of exceptions to \eqref{mmproj} translated to the Assouad dimension setting \eqref{assouadprojection}.  Corollary \ref{thmmain2} is especially useful when $n=2$ and $m=1$ since Orponen's projection theorem \cite{orponenassouad} provides the sharp estimate on the Hausdorff dimension of the set of exceptions to \eqref{assouadprojection} in the planar case.

\begin{cor}\label{thmmain3}
Suppose $(\{\Pi_t : t \in \Omega\}, \mathcal{H}^s, \mathcal{H}^{u})$ is a generalised family of projections of $\rr$ of rank $1$ for all $u>0$.  For all non-empty bounded  $F \subseteq \rr$,
\[
\ad \Pi_t (F) \geq \min\{\ad F, \, 1\}
\]
for  all  $t \in \Omega$ outside of a set of exceptions of Hausdorff dimension at most $s$.
\end{cor}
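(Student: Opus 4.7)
The plan is to deduce Corollary \ref{thmmain3} as a direct consequence of Corollary \ref{thmmain2} combined with Orponen's planar Assouad projection theorem \cite{orponenassouad}.

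The first step is to set $\lambda = \min\{\ad F, 1\}$ and verify the hypothesis of Corollary \ref{thmmain2}, namely that for $u > 0$ arbitrarily small the triple $(\{\Pi_t\}, \mathcal{H}^s, \mathcal{H}^u)$ is a generalised family of projections (which is assumed), and more substantively that
\[
\sup \hd \{V \in G(2,1) : \ad \pi_V(E) < \lambda\} = 0
\]
where the supremum is over non-empty $E \subseteq \rr$ with $\hd E = \ad F$.

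For this I would invoke Orponen's projection theorem, which (as stated in the discussion preceding this corollary) gives the sharp bound $\theta(s,2,1) = 0$ in \eqref{eta}: for any non-empty $E \subseteq \rr$, the set of $V \in G(2,1)$ for which $\ad \pi_V(E) < \min\{\ad E, 1\}$ has Hausdorff dimension zero. Any $E$ appearing in the supremum satisfies $\ad E \ge \hd E = \ad F$, hence $\min\{\ad E, 1\} \ge \min\{\ad F, 1\} = \lambda$. Therefore
\[
\{V \in G(2,1) : \ad \pi_V(E) < \lambda\} \subseteq \{V \in G(2,1) : \ad \pi_V(E) < \min\{\ad E, 1\}\},
\]
and the right-hand set has Hausdorff dimension zero by Orponen's theorem. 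Taking the supremum over all admissible $E$ still gives zero.

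Consequently the hypothesis of Corollary \ref{thmmain2} is satisfied for every $u > 0$, and applying that corollary with $\lambda = \min\{\ad F, 1\}$ yields $\ad \Pi_t(F) \geq \min\{\ad F, 1\}$ for all $t \in \Omega$ outside a set of Hausdorff dimension at most $s$, as required. There is no real obstacle in the argument: the whole content lies in having already packaged the general nonlinear mechanism into Corollary \ref{thmmain2} and in Orponen's sharp planar bound on the exceptional set for orthogonal projections.
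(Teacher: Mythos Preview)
Your argument is correct and follows essentially the same route as the paper: both deduce the result from Corollary~\ref{thmmain2} together with Orponen's projection theorem \cite{orponenassouad}, which gives $\hd\{V \in G(2,1) : \ad \pi_V(E) < \min\{\ad E,1\}\} = 0$ for every non-empty $E \subseteq \rr$. Your version spells out explicitly the inclusion step (using $\ad E \ge \hd E = \ad F$) that the paper leaves implicit, but the content is the same.
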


\begin{proof}
This follows from Corollary \ref{thmmain2} and Orponen's projection theorem \cite[Theorem 1.1]{orponenassouad}, which shows that
\[
 \hd \{ V \in G(2,1) : \ad \pi_V (E) < \min\{\ad E, \, 1\}\} = 0
\]
for all non-empty $E \subseteq \rr$. In particular, $\theta(s,2,1) = 0$ for all $s \in [0,2]$.
\end{proof}

In certain cases one may only be interested in projections of sets $F$ contained in a subset  $U \subseteq \rd$.  In this case the results in this section can be applied under the weaker assumption that the domain of each $\Pi_t$  is an open set $U_0 \supseteq U$, and the differentiability and absolute continuity assumptions hold only  for all $z \in U_0$.  This version of the theorem can be deduced directly  from Theorem \ref{thmmain} appealing to the Whitney extension theorem.  We omit the details.

\section{Applications}

\subsection{Distance sets}

The \emph{distance set problem}, originating with the paper \cite{distancesets}, is a well-studied problem in geometric measure theory.  It was received a lot of attention in the literature in the last few years, see for example \cite{fraserdist, guth,keletishmerkin,orponenAD,pablo,pablo2,pablo3}.   Given $F \subseteq \rd$, the \emph{distance set} of $F$ is
\[
D(F) = \{|x-y| : x,y \in F\} \subseteq [0,\infty).
\]
The distance set problem is to understand the relationship between the dimensions of $F$ and $D(F)$. It is conjectured that if $F \subseteq \rd$ is Borel and $\hd F \geq n/2$, then $\hd D(F) = 1$.  This conjecture  is open for all $n \geq 2$.  The same conjecture can also be made with Hausdorff dimension replaced by Assouad dimension.  This conjecture is also open, although it was  proved in \cite{fraserdist} that for $F \subseteq \rr$, $\ad F > 1$ guarantees $\ad D(F) = 1$.  We are able to fully resolve the Assouad dimension version of the distance set problem in the plane, both dealing with the awkward `critical case' $\ad F = 1$ and    providing sharp estimates for sets with Assouad dimension less than 1.   We emphasise that we do not required $F$ to be bounded or Borel.

\begin{thm}\label{distmain0}
For all non-empty  sets $F \subseteq \rr$,
\[
\ad D(F) \geq \min\{\ad F, \, 1\}.
\]
\end{thm}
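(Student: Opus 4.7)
The plan is to apply Corollary \ref{thmmain3} to the family of pinned distance maps $\Pi_t(x) := |x-t|$ (viewed as maps $\mathbb{R}^2 \to \mathbb{R}^2$ by setting the second coordinate to $0$). For $t \in F$ the image $\Pi_t(F)$ is exactly the pinned distance set $D_t(F) \subseteq D(F)$, so it suffices to locate a single $t \in F$ satisfying $\ad \Pi_t(F) \ge \min\{\ad F, 1\}$. Each $\Pi_t$ is $C^\infty$ on $\mathbb{R}^2 \setminus \{t\}$ with rank-one Jacobian whose image is spanned by $(z-t)/|z-t|$, so the measurable assignment of Definition \ref{defproj} is $T_z(t) = \mathrm{span}(z-t)$: the direction from $t$ to $z$. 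Conditions (Domain) and (Differentiability) are then immediate for any non-atomic measure $\mu$ on a parameter space $\Omega \subseteq \mathbb{R}^2$, and the entire weight of the argument lies in verifying the absolute continuity condition and locating a non-exceptional point in $\Omega \cap F$.

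The natural attempt --- take $\Omega = F$, $\mu$ a Frostman measure on $F$, and $\mathbb{P}$ the Grassmannian measure --- runs into two difficulties: first, $\hd F$ may be strictly smaller than $\ad F$, so Corollary \ref{thmmain3}'s exceptional set (Hausdorff dimension at most $s$) could cover all of $F$; and second, for a Frostman measure $\mu$ on a thin set, $\mu \circ T_z^{-1}$ need not be absolutely continuous with respect to $\mathcal{H}^u$ for arbitrary $z$. To address the first issue I would pass to a weak tangent: by the weak tangent theory cited in the paper (\cite{mackaytyson, anti3}), there is a compact $T \subseteq [0,1]^2$, realized as a Hausdorff limit of rescaled pieces $r_k^{-1}(F \cap B(y_k, r_k))$, with $\hd T = \ad F$. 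The analogous rescaling argument applied to distances (using that $r_k^{-1} D(F \cap B(y_k,r_k)) \subseteq r_k^{-1}(D(F) \cap [0, 2r_k])$) identifies $D(T)$ with a subset of a weak tangent of $D(F)$, so $\ad D(F) \ge \ad D(T)$. It thus suffices to prove the inequality with $F$ replaced by $T$, where now $\hd T = \ad T = \ad F$.

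Working on $T$, I would fix $s$ just below $\hd T = \ad F$, apply Frostman's lemma to obtain a subset $\Omega \subseteq T$ with $0 < \mathcal{H}^s(\Omega) < \infty$, and take $\mu = \mathcal{H}^s|_{\Omega}$. Since $\hd T > s$, the set $T \setminus \mathrm{exc}$ has positive $\mathcal{H}^s$-measure, and in particular contains a point $t_0$, which is the candidate pinning point. The absolute continuity condition then reduces precisely to a radial projection statement: the pushforward of $\mathcal{H}^s|_\Omega$ under the radial projection $t \mapsto (z-t)/|z-t|$ is absolutely continuous with respect to every $\mathcal{H}^u$, $u > 0$. This is exactly the kind of statement proved by the paper's own radial projection theorem (mentioned in the introduction), at least once the weak tangent $T$ has been chosen to avoid degenerate configurations in which $T$ concentrates on lines through some distinguished $z$. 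Once absolute continuity is verified, Corollary \ref{thmmain3} and Orponen's projection theorem combine to give $\ad \Pi_{t_0}(T) \ge \min\{\ad T, 1\} = \min\{\ad F, 1\}$, and unwinding yields the theorem.

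The main obstacle is the third condition in Definition \ref{defproj}: absolute continuity must hold for \emph{every} $z \in \mathbb{R}^2$, not merely for $\mathcal{H}^s$-almost every $z$, and the measure $\mu = \mathcal{H}^s|_\Omega$ is singular. I expect that the degenerate points $z$ (where radial projections of $\mu$ collapse) can be handled by a second appeal to weak tangent theory, using the strict convexity of the Euclidean norm together with Taylor expansion $|x - y| = |x_0 - y_0| + \langle (x-x_0) - (y-y_0), \, (x_0-y_0)/|x_0-y_0|\rangle + O(|\cdot|^2)$ to linearise pinned distance near $(x_0, y_0)$ as an orthogonal projection in the direction $(x_0 - y_0)/|x_0-y_0|$. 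If the original pinning point $y_0$ is degenerate, zooming to a further weak tangent $T'$ at some $x_0 \in T \setminus \{y_0\}$ replaces the nonlinear distance map by a genuine orthogonal projection in the non-degenerate direction $(x_0 - y_0)/|x_0-y_0|$, at which point the conclusion follows from Marstrand--Mattila (or Orponen's refinement) applied to $T'$.
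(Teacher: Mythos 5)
Your overall strategy (pass to a weak tangent $T$ with $\hd T = \ad F$, linearise the pinned distance map to an orthogonal projection, and apply Orponen's projection theorem) is in the same spirit as the paper, and you correctly identify the key obstruction: to feed your singular measure $\mu = \mathcal{H}^s|_\Omega$ into Corollary~\ref{thmmain3} you must verify the absolute continuity condition $\mu \circ T_z^{-1} \ll \mathcal{H}^u$ for \emph{every} $z$, and for a thin fractal $\Omega$ this is simply false at degenerate $z$ (e.g.\ if $\Omega$ concentrates near a line through $z$, the angular pushforward is close to a point mass). Your proposed fix --- zoom to a further tangent at a non-degenerate pinning point --- does not close this gap: you need to know that a non-degenerate pin exists, and this is exactly what the radial-projection/absolute-continuity assertion is supposed to provide, so the argument is circular. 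What's missing is a substitute for the degenerate case, and the paper supplies one that is entirely absent from your proposal: the \emph{radial product} inequality $\hd E \leq \hd \pi_z(E) + \overline{\dim}_{\textup{B}} D_z(E)$ (Lemma~\ref{radprod} and~\eqref{radkey}). The paper does not invoke Corollary~\ref{thmmain3} at all for the distance set theorem. Instead it takes two nested tangents $E' \subseteq$ tangent of $E$ with $\hd E' = \ad F$ and focal point $z$, and then argues a dichotomy: if the radial projection $\pi_z(E)$ has Hausdorff dimension exceeding $\theta$ (the dimension of the exceptional direction set from Orponen's theorem), then a non-exceptional direction $\mathrm{span}(z-x)$ with $x \in E$ exists, and Proposition~\ref{keylemma} shows $\pi_{\mathrm{span}(z-x)}(E')$ is a tangent to $D_z(E)$; whereas if $\hd \pi_z(E) \leq \theta$, the radial product inequality directly gives $\ad D_z(E) \geq \hd E - \theta = \ad F - \theta$. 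In the plane $\theta = 0$, so both branches yield $\min\{\ad F, 1\}$. Your write-up gets as far as identifying the degenerate case as the sticking point, but it lacks the radial product lemma (or any equivalent device) that makes the degenerate case harmless rather than fatal, so the argument as it stands does not prove the theorem.
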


Theorem \ref{distmain0} follows immediately from the more general Theorem \ref{distmain} below.  Theorem \ref{distmain0} is sharp, as the following corollary shows. For comparison, it was already observed in \cite{fraserdist} that, for all $s \in [0,2]$, $\sup\{ \ad D(F) : F \subseteq \rr \text{ and } \ad F \leq s\} = 1.$
\begin{cor}
For all $s \in [0,1]$,
\[
\inf\{ \ad D(F) : F \subseteq \rr \text{ and } \ad F \geq s\} = s.
\]
\end{cor}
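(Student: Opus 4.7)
The lower bound $\inf \geq s$ will follow immediately from Theorem \ref{distmain0}, since $\ad F \geq s$ with $s \leq 1$ forces $\ad D(F) \geq \min\{\ad F, 1\} \geq s$. For the matching upper bound, my plan is to exhibit a one-parameter family of sets $F_b \subseteq \rr$ with $\ad F_b \geq s$ and $\ad D(F_b) \to s$ as $b \to \infty$. The extreme cases $s = 0$ and $s = 1$ are handled by the trivial examples $F = \{0\}$ and $F = [0,1] \times \{0\}$, so I focus on $s \in (0,1)$.

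I will place $F_b$ on the $x$-axis, which reduces the distance set to the absolute difference set in $\mathbb{R}$. For each sufficiently large integer $b$, set $N_b := \lceil b^s \rceil$ and let $K_b \subseteq [0,1]$ be the set of real numbers whose base-$b$ expansions use only digits from $\{0, 1, \dots, N_b - 1\}$; equivalently, $K_b$ is the attractor of the similarity IFS $\{\phi_i(x) = (x + i)/b : 0 \leq i \leq N_b - 1\}$. Take $F_b := K_b \times \{0\}$. Once $b \geq 2^{1/(1-s)}$ one has $N_b \leq (b+1)/2$, which is the open set condition for this IFS, so $K_b$ is Ahlfors $(\log N_b / \log b)$-regular and hence $\ad F_b = \log N_b / \log b \geq s$.

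The key step is analysing $D(F_b) = |K_b - K_b|$. The digit description identifies $K_b - K_b$ as the attractor of the enlarged IFS $\{\phi_i : -(N_b - 1) \leq i \leq N_b - 1\}$, consisting of $2N_b - 1$ similarities of common ratio $1/b$. The same bound $N_b \leq (b+1)/2$ will guarantee that the $2N_b - 1$ images of the convex hull of $K_b - K_b$ under these maps have pairwise disjoint interiors, yielding the open set condition for the difference IFS; consequently $K_b - K_b$ is Ahlfors regular of dimension $\log(2N_b - 1)/\log b$. Since $|K_b - K_b| \subseteq K_b - K_b$, monotonicity of Assouad dimension gives
\[
s \leq \ad D(F_b) \leq \ad(K_b - K_b) = \frac{\log(2 N_b - 1)}{\log b},
\]
and letting $b \to \infty$ the right-hand side tends to $s$. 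The only mild obstacle is verifying the open set condition for the enlarged difference IFS, a routine computation on the convex hull; conceptually the point is that in a large base $b$ the combinatorial inflation from $N_b$ to $2 N_b - 1$ digits contributes only an $O(1/\log b)$ correction to the similarity dimension.
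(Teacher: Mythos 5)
Your proof is correct and matches the paper's approach: both lower-bound via Theorem \ref{distmain0} and upper-bound by placing a base-$b$ (resp.\ base-$N$) Cantor set with $\lceil b^s\rceil$ digits on a line in the plane, observing that the difference set is the self-similar attractor of an IFS with $2\lceil b^s\rceil-1$ maps, and letting $b\to\infty$. The paper phrases the last step as projecting $F\times F$ onto $\textup{span}(1,-1)$ rather than writing down the difference IFS explicitly, but this is the same computation, and your open-set-condition check for the enlarged IFS is exactly the ``standard construction'' the paper invokes.
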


\begin{proof}
The lower bound ($\geq s$) follows from Theorem \ref{distmain0}.  The upper bound ($\leq s$) follows by a standard construction: see,  for example, \cite[Section 3.3.1]{fraserdist}. Briefly, for $s \in (0,1)$, let $F\subseteq [0,1]$ be a self-similar set generated by $\lceil N^s \rceil$ equally spaced homotheties with contraction ratio $1/N$.  This ensures that $\ad F \geq s$.  Moreover, for  $V=\textup{span}(1,-1) \in G(2,1)$,   the distance set $D(F)$ has Assouad dimension no more than that of  $\pi_V(F \times F)$, which is itself a self-similar set generated by $2\lceil N^s \rceil - 1$ equally spaced homotheties with contraction ratio $1/N$.  As  $N \to \infty$, $\ad D(F)$ approaches $s$.
\end{proof}

The next theorem   considers the   distance problem in $\rd$ for arbitrary $n \geq 2$.  It shows that the set of exceptions to \eqref{assouadprojection} plays a  role.   Consider projections of sets of Assouad dimension $s$ from $\rd$ onto $m$-dimensional subspaces and let $\theta(s,n,m)$ be the largest possible Hausdorff dimension of  set of exceptions to \eqref{assouadprojection}, recall \eqref{eta}.  

\begin{thm}\label{distmain}
For all non-empty  sets $F \subseteq \rd$,
\[
\ad D(F) \geq \min\{\ad F-\theta, \, 1\}
\]
where $\theta = \theta(\ad F, n, 1)$.
\end{thm}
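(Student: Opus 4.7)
The plan is to apply Corollary \ref{thmmain2} to the family of pinned distance maps, after passing from $F$ to a suitable weak tangent.  For $t\in\rd$ set $\Pi_t:\rd\to\rd$ by $\Pi_t(z)=(|z-t|,0,\ldots,0)$.  On $\rd\setminus\{t\}$ this map is $C^\infty$ with Jacobian of constant rank $1$, and a direct computation gives $T_z(t)=\mathrm{span}(z-t)\in G(n,1)$; that is, $T_z$ is the radial projection from the point $z$.  Writing $s=\ad F$ and $\theta=\theta(s,n,1)$, we may assume $s>\theta$ and that $F$ is bounded.  By the theory of weak tangents \cite{mackaytyson,anti3} there is a bounded weak tangent $E$ of $F$ with $\ad E=\hd E=s$.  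Since similarities rescale distances linearly, for each $t\in E$ the pinned distance set $\Pi_t(E)$ arises as a Hausdorff limit of rescaled pinned distance sets of $F$ and is therefore itself a weak tangent of $D(F)$; as taking a weak tangent cannot increase the Assouad dimension,
\[
\ad D(F)\ \geq\ \ad\Pi_t(E)\qquad\text{for every }t\in E.
\]

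It thus suffices to exhibit some $t\in E$ with $\ad\Pi_t(E)\geq\lambda:=\min\{s-\theta,1\}$, and this should follow from Corollary \ref{thmmain2} applied to $E$.  Fix $u$ slightly greater than $\theta$ and $s'\in(\theta,s)$, set $\mathbb P=\mathcal H^u$ on $G(n,1)$, and take $\mu=\mathcal H^{s'}$ restricted to a bounded open neighbourhood $\Omega$ of $E$.  The definition of $\theta$ together with \eqref{assouadprojection} implies that for every $E'$ relevant to the infimum in Theorem \ref{thmmain} (taken, after a standard reduction to a further weak tangent, over sets with $\hd E'=\ad E'=s$) the exceptional set $\{V\in G(n,1):\ad\pi_V(E')<\min\{s,1\}\}$ has Hausdorff dimension at most $\theta<u$, so $\underset{V\sim\mathcal H^u}{\operatorname{essinf}}\ad\pi_V(E')\geq\min\{s,1\}\geq\lambda$.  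The remaining hypothesis of Corollary \ref{thmmain2} is that $(\{\Pi_t\},\mathcal H^{s'},\mathcal H^u)$ is a generalised family of projections, which reduces to the absolute continuity $\mathcal H^{s'}|_\Omega\circ T_z^{-1}\ll\mathcal H^u$ on $G(n,1)$ for every $z\in\rd$.  Granted this, the corollary yields $\ad\Pi_t(E)\geq\lambda$ for all $t$ outside a set of Hausdorff dimension at most $s'<s=\hd E$, and so some pin $t_0\in E$ realises the bound.

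The main technical hurdle is the absolute continuity condition.  One must choose a single Frostman exponent $s'$ that simultaneously (i) exceeds $\theta$, so that pushing $\mathcal H^{s'}$ along each radial projection $T_z$ produces a measure absolutely continuous with respect to some $\mathcal H^u$ with $u>\theta$, and (ii) stays strictly below $\hd E=s$, so that the parametric exceptional set in Corollary \ref{thmmain2} cannot cover $E$.  I expect the relevant absolute continuity to follow from a Peres--Schlag type transversality computation \cite{peresschlag}, exploiting that $T_z$ is a smooth submersion of $\rd\setminus\{z\}$ onto $G(n,1)$; handling this uniformly in $z$ (and in particular for $z\in\Omega$) is where the real work lies, and the tension between (i) and (ii) is what ultimately dictates the $\theta$-loss in the bound $\min\{s-\theta,1\}$.
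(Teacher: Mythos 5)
Your approach does not work, and the failure is exactly at the point you flagged as ``where the real work lies.'' For the pinned distance map $\Pi_t$, the direction map $T_z(t)=\mathrm{span}(z-t)$ is the radial projection from $z$, whose fibers are lines through $z$. For a measure of the form $\mathcal{H}^{s'}|_\Omega$ (with $\Omega$ bounded and bounded away from $z$) the cone structure gives, via a coarea/Fubini argument, absolute continuity $\mathcal{H}^{s'}|_\Omega\circ T_z^{-1}\ll \mathcal{H}^{u}$ only when $s'\geq u+1$; if $s'<u+1$ one can build a set $A\subseteq G(n,1)$ with $\mathcal{H}^u(A)=0$ whose cone over $z$ has infinite $\mathcal{H}^{s'}$-measure in $\Omega$. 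So the hypothesis you need is $s'>u+1>\theta+1$, not $s'>\theta$. Combined with your requirement $s'<\hd E=\ad F$, the construction only runs when $\ad F>\theta+1$, in which case $\min\{\ad F-\theta,1\}=1$ and there is nothing new to prove. The regime $\ad F\leq \theta+1$---which is precisely where the $\theta$-loss in the statement matters---is not reached by your argument. Moreover, if the argument \emph{had} gone through for some $s'\in(\theta,\ad F)$, it would have proved $\ad D(F)\geq\min\{\ad F,1\}$ outright, which is a strictly stronger claim than the theorem and is not what the projection machinery yields in $\rd$ for $n\geq 3$ (the paper's own Proposition shows $\theta(s,n,1)\geq n-2>0$ there).

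The paper's proof is structurally different and this is what closes the gap. It first reduces to a simple tangent $E$ of $F$ with $\hd E=\ad F$ via \cite[Lemma 3.1]{fraserdist}, then takes a \emph{second} simple tangent $E'$ of $E$ with focal point $z$ and the same dimension, and finally splits according to the size of the radial projection $\pi_z(E)$. If $\hd\pi_z(E)>\theta$ there is a pin $x\in E$ with $\mathrm{span}(z-x)$ outside the exceptional set for $E'$, and Proposition~\ref{keylemma} applied to $\Pi_x$ pushes the orthogonal projection bound $\min\{\ad F,1\}$ down to $D_z(E)$. If instead $\hd\pi_z(E)\leq\theta$, the radial-product decomposition $E\subseteq(\pi_z(E)-z)\otimes D_z(E)+z$ (Lemma~\ref{radprod}) gives the purely box-dimension estimate $\hd E\leq\hd\pi_z(E)+\ubd D_z(E)$, hence $\ad D_z(E)\geq\ad F-\theta$ directly, with no transversality needed at all. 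Your proposal has no analogue of this second case, and it is exactly the second case that produces the $-\theta$ term. Also note that the paper derives $\ad D(F)\geq\ad D_t(E)$ for $t\in E$ simply from $D_t(E)\subseteq D(E)$ plus \cite[Lemma 3.1]{fraserdist}; no claim that $\Pi_t(E)$ is literally a weak tangent of $D(F)$ is required, so that part of your argument can be simplified but is not the issue.
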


The proof of Theorem \ref{distmain} requires some technical machinery we have not yet introduced.  Therefore we delay the proof to Section \ref{distproof}.  Theorem \ref{distmain0} follows from Theorem \ref{distmain} together with Orponen's projection theorem \cite[Theorem 1.1]{orponenassouad} which states that $\theta(s,2,1)=0$ for all $s \in [0,2]$.   Given this connection with the exceptional set, it is natural to ask when information on the exceptional set solves the distance problem in higher dimensions.  Applying \eqref{etaest}, we get
\[
\theta(s,n,1) \leq \min\{n-s, n+s-2\}.
\]
Combining this with Theorem \ref{distmain} we get the following, which does \emph{not} improve over known results, e.g. \cite[Theorem 2.5]{fraserdist}, but provides  a somewhat different proof.
\begin{cor}
If  $F \subseteq \rd$ with $\ad F \geq (n+1)/2$, then $\ad D(F) =1$.
\end{cor}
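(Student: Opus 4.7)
The plan is to combine Theorem \ref{distmain} with the quoted bound from \eqref{etaest} and check that, in the specified regime $\ad F \geq (n+1)/2$, the lower bound simplifies to $1$. Since $D(F) \subseteq \mathbb{R}$, the trivial upper bound $\ad D(F) \leq 1$ then forces equality.

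Concretely, set $s = \ad F$ and $\theta = \theta(s,n,1)$. The specialisation of \eqref{etaest} to $m=1$ reads $\theta \leq (n-1) - |1-s|$, which the excerpt has already rewritten as $\theta \leq \min\{n-s,\, n+s-2\}$. Under the hypothesis $s \geq (n+1)/2$ we have $s \geq 1$ (for $n \geq 1$), and the relevant branch of the minimum is $n-s$. Thus $\theta \leq n-s$, and therefore
\[
s - \theta \;\geq\; s - (n-s) \;=\; 2s - n \;\geq\; 2 \cdot \tfrac{n+1}{2} - n \;=\; 1.
\]

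Plugging this into Theorem \ref{distmain} gives
\[
\ad D(F) \;\geq\; \min\{s-\theta,\, 1\} \;=\; 1,
\]
and combining with $\ad D(F) \leq 1$ (since $D(F) \subseteq [0,\infty)$ and the Assouad dimension is monotone and bounded above by the ambient dimension) yields $\ad D(F) = 1$. No obstacle arises; the whole argument is a one-line substitution, and its only purpose is to highlight how the hypothesis $\ad F \geq (n+1)/2$ is precisely what is needed to absorb the exceptional-set loss $\theta$.
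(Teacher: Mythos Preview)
Your argument is correct and is exactly the paper's intended proof: combine Theorem \ref{distmain} with the bound \eqref{etaest} specialised to $m=1$, observe that $\ad F \geq (n+1)/2$ forces $\ad F - \theta \geq 1$, and conclude using the trivial upper bound $\ad D(F) \leq 1$. The paper states this corollary without a displayed proof precisely because it is the one-line substitution you have written out.
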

The bound \eqref{etaest}  for $\theta(s,n,m)$ was proved by applying the bounds for the exceptional set in the Marstrand-Mattila projection theorem \eqref{mmproj}.  Orponen's projection theorem is reason to believe that much better bounds are available in the Assouad dimension case. Indeed, if we could prove that $\theta(n/2,n,1) \leq n/2-1$, then  all $F \subseteq \rd$ with $\ad F \geq n/2$ would satisfy $\ad D(F) =1$.  However, this is not true, at least for $n=3$.

\begin{prop}
For all $n \geq 2$, $\theta(s,n,1) \geq n-2$ for all $s \in[1,2)$.
\end{prop}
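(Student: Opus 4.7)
The plan is to prove $\theta(s, n, 1) \geq n-2$ by constructing, for each $s \in [1,2)$, an explicit set $F \subseteq \rd$ with $\ad F = s$ whose exceptional set of directions in $G(n,1)$ has Hausdorff dimension at least $n-2$.

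First I would build a planar example. Fix $s \in [1, 2)$ and pick $E \subseteq [0,1]$ with $\ad E = s - 1 \in [0, 1)$: take a single point if $s = 1$, and a standard self-similar Cantor set of the required Assouad dimension if $s > 1$. Set $F_0 = E \times [0,1] \subseteq \rr$; a direct covering argument (or Luukkainen's product formula) yields $\ad F_0 = \ad E + 1 = s$. Next I would identify the directions $w \in S^1$ for which $\ad \pi_w(F_0) < 1$. For $w = (\cos\theta, \sin\theta)$ with $\sin\theta \neq 0$, fixing $x_1 \in E$ and letting $x_2$ sweep $[0,1]$ already forces $\pi_w(F_0)$ to contain an interval, so $\ad \pi_w(F_0) = 1$. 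For the horizontal direction $w = (\pm 1, 0)$ the projection is $\pm E$, with Assouad dimension $s - 1 < 1$. Hence the horizontal direction is the unique exception in $G(2,1)$.

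Next I would lift to $\rd$ by setting $F := F_0 \times \{0\}^{n-2}$, so that $\ad F = \ad F_0 = s$. For $V = \textup{span}(v) \in G(n,1)$ with unit $v = (v_1, \ldots, v_n)$, the projection $\pi_V(x_1, x_2, 0, \ldots, 0) = (v_1 x_1 + v_2 x_2) v$ depends only on $(v_1, v_2)$; up to an isometry of $V$, $\pi_V(F)$ is either $\{0\}$ (when $v_1 = v_2 = 0$) or the orthogonal projection of $F_0$ onto the direction $(v_1, v_2)/|(v_1, v_2)|$ in $\rr$. Combining with the planar analysis, $\ad \pi_V(F) < 1$ holds precisely when $v_2 = 0$. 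The set $\{[v] \in G(n,1) : v_2 = 0\}$ is a smoothly embedded $\mathbb{RP}^{n-2}$ inside $G(n,1) = \mathbb{RP}^{n-1}$, and therefore has Hausdorff dimension $n - 2$. This establishes $\theta(s, n, 1) \geq n - 2$.

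The only nontrivial ingredient is the product formula $\ad(E \times [0,1]) = \ad E + 1$, which should reduce to a short covering calculation showing that at all scales $0 < r < R$ the interval factor contributes an independent multiplicative factor of order $R/r$ to any efficient cover. Everything else is elementary linear algebra, together with the observation that a nontrivial linear image of a line segment is still a line segment.
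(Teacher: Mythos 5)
Your construction is essentially the same as the paper's: in both cases one takes $F$ to be the product of a set of Assouad dimension $s-1$ lying on a line with an interval $[0,1]$ orthogonal to it, and observes that projecting onto any line in the hyperplane orthogonal to the interval direction collapses the $[0,1]$ factor, so the projection has Assouad dimension $s-1<1$; the family of such lines is a copy of $G(n-1,1)\cong\mathbb{RP}^{n-2}$, which has Hausdorff dimension $n-2$. You do slightly more than needed by verifying that every \emph{other} direction yields a projection containing an interval (hence of full dimension $1$); the paper skips this because $\theta$ only requires a lower bound on the exceptional set, not its exact identification.
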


\begin{proof}
Let $V_0 \in G(n,n-1)$ and $E \subseteq V_0$ be contained in a line segment with   $\ad E = s-1$.  Let $F=E \times [0,1] \subseteq \rd$.  Clearly $\ad F = s$ and for all $V \in G(n,1)$ with $V \subseteq V_0$ the projection  $\pi_V(F)$ is the image of $E$ under a similarity (possible with contraction ratio 0).  Therefore, for all such $V$,
\[
\ad \pi_V(F) = s-1<1 = \min\{s,1\}.
\]
The Hausdorff dimension of the set of such $V$ is the same as that of $G(n-1,1)$ which is $n-2$.
\end{proof}

\subsubsection{Pinned distance sets}

A related problem is to consider pinned distance sets.  Given $x \in \rd$, the \emph{pinned distance set} of $F \subseteq \rd$ at $x$ is
\[
D_x(F) = \{|x-y| : y \in F\}.
\]
If $x \in F$, then $D_x(F) \subseteq D(F)$. Here the conjecture is that if $F$ is Borel and $\hd F \geq d/2$, then there should exist a pin $x \in F$ such that $\hd D_x(F) = 1$ (or even \emph{many} pins).   We are also able to prove some results on pinned distance sets in the Assouad dimension setting.

\begin{thm}\label{distpind}
Let $F \subseteq \rd$ be a non-empty bounded  set.  For Lebesgue almost all $x \in \rd$,
\[
\ad D_x( F) \geq  \min\{\ad F, \, 1\}.
\]
Moreover, the set of exceptional $x$ where this does not hold has Hausdorff dimension at most $1+\theta(\ad E, n, 1)  \leq  n-|\ad F - 1|$.
\end{thm}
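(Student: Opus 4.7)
The plan is to apply Corollaries \ref{thmmaincor} and \ref{thmmain2} to the parameterised family of pinned distance maps. Set $\Omega = \rd$ and, for each $x \in \Omega$, define $\Pi_x:\rd \to \rd$ by $\Pi_x(y) = (|x-y|, 0, \ldots, 0)$, so that $\Pi_x(F)$ is isometric to $D_x(F)$ and has the same Assouad dimension. Away from the diagonal $\{x=y\}$, $\Pi_x$ is $C^1$ of constant rank $1$, with gradient at $y$ equal to $(y-x)/|y-x|$; for each fixed $y \in \rd$ the only singular parameter is $x = y$, which is $\mu$-null for any non-atomic Borel $\mu$. The associated map is
\[
T_y(x) = \ker(J_y \Pi_x)^\perp = \mathrm{span}(y-x) \in G(n,1),
\]
so $T_y$ is essentially the radial projection from $y$ to real projective space.

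For the Lebesgue-almost-everywhere statement I would take $\mu = \mathcal{H}^n$ and $\mathbb{P}$ the Grassmannian measure on $G(n,1)$. Polar coordinates $x = y + rv$ about $y$ give $d\mathcal{H}^n(x) = r^{n-1}\, dr\, d\sigma(v)$, and integrating out $r$ on bounded pieces shows that $\mu \circ T_y^{-1}$ is absolutely continuous with respect to the Grassmannian measure. Hence $(\{\Pi_x\}_{x\in\rd}, \mathcal{H}^n, \mathbb{P})$ is a generalised family of projections of rank $1$, and Corollary \ref{thmmaincor} yields $\ad D_x(F) \geq \min\{\ad F, 1\}$ for $\mathcal{H}^n$-almost every $x$.

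For the exceptional-set bound I would instead apply Corollary \ref{thmmain2} with $\mu = \mathcal{H}^s$ and $\mathbb{P} = \mathcal{H}^u$ for $u > \theta(\ad F, n, 1)$ and $s > 1 + u$. The absolute continuity $\mathcal{H}^s \circ T_y^{-1} \ll \mathcal{H}^u$ is verified as follows: if $\mathcal{H}^u(V) = 0$ then $\dim_H V \leq u$, and the preimage $T_y^{-1}(V) = \{y + rv : r > 0,\ \mathrm{span}(v) \in V\}$, restricted to any annulus $\{a \leq |x-y| \leq b\}$ with $a > 0$, is bi-Lipschitz equivalent to $V^* \times [a,b]$ under $(v, r) \mapsto y + rv$ (where $V^*$ is a lift of $V$ to $S^{n-1}$), via the identity $|r_1 v_1 - r_2 v_2|^2 = (r_1 - r_2)^2 + r_1 r_2 |v_1 - v_2|^2$. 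Consequently $\dim_H T_y^{-1}(V) \leq u + 1 < s$ and so $\mathcal{H}^s(T_y^{-1}(V)) = 0$. Corollary \ref{thmmain2} then bounds the exceptional set of $x$ by $s$; letting $s \searrow 1 + u$ and $u \searrow \theta(\ad F, n, 1)$ and invoking \eqref{etaest} for $m = 1$ yields $1 + \theta(\ad F, n, 1) \leq n - |\ad F - 1|$.

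I expect the main technical obstacle to be bridging what Corollary \ref{thmmain2} gives directly and the stated bound: the infimum in Theorem \ref{thmmain} ranges over all $E \subseteq \rd$ with $\hd E = \ad F$, and since such $E$ may have $\ad E$ strictly larger than $\ad F$, the most naive threshold one can extract is $1 + \sup_{t \in [\ad F, n]} \theta(t, n, 1)$. This coincides with $1 + \theta(\ad F, n, 1)$ when $\ad F \geq 1$, but for $\ad F < 1$ one must reduce to an $E$ with $\ad E = \hd E = \ad F$, supplied by the existence of an Ahlfors-regular weak tangent of $F$ realising $\ad F$ (in the spirit of the K\"a\-en\-m\"aki--Oja\-la--Rossi weak tangent theorem). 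This weak-tangent reduction, which explains the appearance of $\ad E$ in the statement, is the core new ingredient beyond the straightforward Lebesgue-almost-everywhere argument.
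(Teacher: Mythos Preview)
Your approach is essentially the paper's: identify the pinned distance maps as a generalised family of projections of rank $1$ with $T_z(t)=\mathrm{span}(z-t)$, observe that fibres of $T_z$ are lines so that $(\{\Pi_t\},\mathcal{H}^{u+1},\mathcal{H}^u)$ is a generalised family for every $u>0$, and then apply Corollaries \ref{thmmaincor} and \ref{thmmain2}. Your polar--coordinate verification of the absolute continuity is more explicit than the paper's one-line assertion, but the content is identical.

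The ``obstacle'' you flag in the last paragraph is not a genuine gap, and your proposed fix is neither needed nor available. You are right that Corollary \ref{thmmain2} asks for a bound on $\hd\{V:\ad\pi_V(E)<\lambda\}$ uniform over $E$ with $\hd E=\ad F$, while $\theta(\cdot,n,1)$ is defined via the Assouad dimension. But no Ahlfors--regular tangent is required: since $\lambda=\min\{\ad F,1\}=\min\{\hd E,1\}$ and $\ad\pi_V(E)\geq\hd\pi_V(E)$, one has
\[
\{V:\ad\pi_V(E)<\lambda\}\subseteq\{V:\hd\pi_V(E)<\min\{\hd E,1\}\},
\]
and the right-hand side is the Marstrand--Mattila exceptional set, whose Hausdorff dimension is at most $(n-1)-|1-\hd E|=(n-1)-|1-\ad F|$ by the classical Kaufman--Falconer bounds. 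This is precisely how \eqref{etaest} is obtained, and it depends only on $\hd E$, so the supremum over such $E$ causes no loss. The appearance of ``$\ad E$'' in the theorem statement is a typographical slip for $\ad F$; it does not encode a hidden weak-tangent reduction. (Note also that the K\"aenm\"aki--Ojala--Rossi theorem only yields a tangent with $\hd E=\ad F$, not an Ahlfors--regular one, so the fix you sketch would not go through as stated.)
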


\begin{proof}
For $t \in \mathbb{R}^n$, consider the maps $\Pi_t : \rd \to \mathbb{R}$ defined by
\[
\Pi_t(x) = |x-t|.
\]
Then,
\[
T_z(t) = \textup{ker}(J_z \Pi_t)^\perp = \textup{span}(z-t) \in G(n, 1)
\]
is defined for all $t \neq z$.  Since the preimage of $V \in G(n,1)$ under $T_z$ is a line (with Hausdorff dimension   1), the triple $(\{\Pi_t : t \in \rd\},  \mathcal{H}^{u+1}, \mathcal{H}^u)$ is   a generalised family of projections of $\rd$ of rank $1$ for all $u>0$.  The results follow by applying Corollary \ref{thmmaincor} (with $u=n-1$) and Corollary \ref{thmmain2} (with $u>\theta(\ad F, n, 1)$, recalling \eqref{eta})  observing that $\Pi_t(F) = D_t(F)$.  The quantitative bound comes from \eqref{etaest}.
\end{proof}

We can upgrade this result in the planar case since $\theta(s,2,1)=0$ for all $s \in [0,2]$, which was proved in  \cite{orponenassouad}.

\begin{cor}\label{distpin}
Let $F \subseteq \rr$ be a non-empty bounded set.  For all $x \in \rr$   outside of a set of exceptions of Hausdorff dimension at most $1$,
\[
\ad D_x( F) \geq  \min\{\ad F, \, 1\}.
\]
Therefore, if $\hd F>1$, then there exists $x \in F$ such that $\ad D_x( F) =1$.
\end{cor}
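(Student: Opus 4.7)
The plan is to derive both parts of the corollary as essentially immediate consequences of Theorem \ref{distpind} specialised to $n=2$, combined with Orponen's planar projection theorem \cite{orponenassouad}. First I would invoke Theorem \ref{distpind} with $n=2$, which yields that the set of $x \in \rr$ where the inequality $\ad D_x(F) \geq \min\{\ad F, 1\}$ fails has Hausdorff dimension at most $1+\theta(\ad F, 2, 1)$. The key ingredient is Orponen's theorem, which guarantees $\theta(s, 2, 1)=0$ for every $s \in [0,2]$, upgrading the exceptional-set bound to exactly $1$. This establishes the first assertion.

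For the second assertion the plan is to exploit monotonicity of Hausdorff dimension. Let $A \subseteq \rr$ denote the exceptional set produced above, so $\hd A \leq 1$. If $\hd F > 1$, then $F \not\subseteq A$, since containment would force $\hd F \leq \hd A \leq 1$, contradicting the hypothesis. Pick any $x \in F \setminus A$; then $\ad D_x(F) \geq \min\{\ad F, 1\}$, and since $\ad F \geq \hd F > 1$ this simplifies to $\ad D_x(F) \geq 1$. The trivial upper bound $\ad D_x(F) \leq 1$ (because $D_x(F) \subseteq \mathbb{R}$) yields equality.

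There is no substantial obstacle here: all the technical work has already been done inside Theorem \ref{distpind} via Corollary \ref{thmmain3} and Orponen's theorem. The only step requiring any care is the monotonicity argument that pulls a pin out of $F$, which relies on the strict comparison $\hd A \leq 1 < \hd F$ and would break if the exceptional set bound were merely $\leq \hd F$; the planar case is exactly where Orponen's sharp vanishing of $\theta(s,2,1)$ makes the argument go through.
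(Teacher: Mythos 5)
Your proposal is correct and matches the paper's (implicit) argument exactly: the first assertion is Theorem \ref{distpind} with $n=2$ together with Orponen's vanishing $\theta(s,2,1)=0$, and the second follows from the monotonicity argument $\hd A \leq 1 < \hd F \leq \ad F$ plus the trivial bound $\ad D_x(F)\leq 1$. The paper states only the $\theta(s,2,1)=0$ ingredient and leaves the rest to the reader, so nothing is missing in your write-up.
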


Shmerkin \cite{pablo2} proved that if $F \subseteq \rr$ is a Borel set with equal Hausdorff and packing dimension strictly larger than 1, then  there exists $x \in F$ such that $\hd D_x( F) =1$.

\subsubsection{Distance sets with respect to other norms} \label{norm}

It is also natural to consider the distance set (and pinned distance set) problem with respect to norms other than the Euclidean norm.  That is, given a norm $\| \cdot \|$ on $\rd$, the distance set of $F \subseteq \rd$ with respect to $\| \cdot \|$ is
\[
D^{\| \cdot \|}(F) =  \{\|x-y\| : x, y \in F\}
\]
with the obvious analogous definition of pinned distance sets $D_x^{\| \cdot \|}(F) $.  Whether or not we expect the same results to hold turns out to depend on the curvature of the unit ball in the given norm.   Theorems \ref{distmain0}, \ref{distmain}, \ref{distpind} and Corollary \ref{distpin} hold in this more general setting provided  the  boundary of the unit ball  $\partial B$ is  a $C^1$ manifold and the associated Gauss map cannot decrease Hausdorff dimension (that is, $\hd g(E) \geq \hd E$ for all $E \subseteq \partial B$, where $g: \partial B \to S^{n-1}$ is the Gauss map).  For example, this holds if $\partial B$  is  a $C^2$ manifold with non-vanishing Gaussian curvature, since in that case the Gauss map is a diffeomorphism, see  \cite[Corollary 3.1]{topology}.

  Let  $\Pi_t^{\| \cdot \|}: \rd \to \mathbb{R}$ denote the pinned distance map with respect to a general norm, that is, $\Pi^{\| \cdot \|}_t(x) = \|x-t\|$, and let $T_z(t) = \textup{ker}(J_z \Pi_t^{\| \cdot \|})^\perp$.   If the boundary of the unit ball $\partial B$ is  $C^1$, then the restriction of $T_z$ to $(\partial B+z)$ coincides with the Gauss map (identifying antipodal points in $S^{n-1}$ and then identifying with $G(n,1)$).  Therefore, provided the Gauss map cannot decrease Hausdorff dimension,
\[
\mathcal{H}^{u+1} \circ T_z^{-1} \ll \mathcal{H}^u
\]
for all $u>0$.  This observation allows the proof of  Theorem \ref{distpind} (and Corollary \ref{distpin})  to  go through in this more general setting.  The proof of Theorem  \ref{distmain} (and Theorem \ref{distmain0}) is deferred until Section \ref{distproof} and so we also defer discussion of its extension to general norms.

The assumption of  non-vanishing Gaussian curvature is natural when studying distance sets.  Indeed, for certain ``flat norms'' the analogous results do not hold, see \cite{poly}.  See recent examples \cite{guth, pablo3} where results are obtained for the Hausdorff dimension of distance sets under the assumption that the unit ball is $C^\infty$ and $C^2$, respectively, in addition to having non-vanishing Gaussian curvature.   It is perhaps noteworthy that we only require $C^1$  regularity and a weaker condition on the Gauss map.  For example,    our techniques  allow for   the Gaussian curvature to vanish on a countable set of points.

\subsection{A radial projection theorem for Assouad dimension}

Radial projections are perhaps the most natural family of projections alongside orthogonal projections.  Given $t \in \rd$, the radial projection $\pi_t$ maps $\rd\setminus \{t\}$ onto the boundary of the sphere centred at $t$ with radius 1.  Specifically, $\pi_t(x) \in t+S^{n-1}$ is defined by
\[
\pi_t(x) = \frac{x-t}{|x-t|} +t
\]
and we define $\pi_t(t)=t$ for convenience.  Radial analogues of results such as the Marstrand-Mattila projection theorem are known and turn out to be important in their own right in a variety of settings.  For example, Orponen's radial projection theorem \cite{orponenradial} has proved a useful tool in in studying the distance set problem, see \cite{guth, keletishmerkin}.  Recall the definition of $\theta$ from \eqref{eta}.

\begin{thm}\label{radialthm1}
Let $F \subseteq \rd$ be a non-empty bounded  set.  For Lebesgue almost all $t \in \rd$,
\[
\ad \pi_t( F) \geq  \min\{\ad F, \, n-1\}.
\]
Moreover, the set of exceptional $t \in \rd$ where this does not hold has Hausdorff dimension at most $1+\theta(\ad F, n, n-1) \leq \min\{\ad F+1, 2n-1-\ad F \}$.
\end{thm}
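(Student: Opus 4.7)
The plan is to realise the radial projections as a generalised family of projections in the sense of Definition \ref{defproj} and then extract both conclusions from Corollaries \ref{thmmaincor} and \ref{thmmain2}. This parallels the pinned-distance argument of Theorem \ref{distpind}; the essential differences are that here each $\pi_t$ has rank $n-1$ rather than $1$, so the relevant subspaces live in $G(n,n-1)$ rather than $G(n,1)$, and that the target Hausdorff-dimension bound for the exceptional set is accordingly $1 + \theta(\ad F, n, n-1)$.

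First I would check the differentiability and constant-rank conditions. For each $t \in \rd$ the map $\pi_t$ is smooth on $\rd \setminus \{t\}$, and differentiating $\pi_t(x) = (x-t)/|x-t| + t$ shows that $J_z \pi_t$ has rank $n-1$ with kernel equal to the radial line $\textup{span}(z-t)$, because moving $z$ along this line only changes the magnitude of $z-t$ and not its direction. Hence $T_z(t) = (\textup{span}(z-t))^\perp \in G(n,n-1)$ is well-defined whenever $t \neq z$. The main technical step is the absolute continuity of $\mathcal{H}^{u+1} \circ T_z^{-1}$ with respect to $\mathcal{H}^u$ on $G(n,n-1)$ for every $u>0$. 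The key observation is that the fibre $T_z^{-1}(V)$ over any $V \in G(n,n-1)$ is the single line through $z$ in direction $V^\perp$. Passing to polar coordinates centred at $z$, the map $T_z$ factors through the antipodal quotient of $t \mapsto (z-t)/|z-t| \in S^{n-1}$, so the preimage of an $\mathcal{H}^u$-null set $A \subseteq G(n,n-1)$ is a cone of lines through $z$, which has $\mathcal{H}^{u+1}$-measure zero by a standard covering argument in polar coordinates. This establishes that $(\{\pi_t : t \in \rd\}, \mathcal{H}^{u+1}, \mathcal{H}^u)$ is a generalised family of projections of rank $n-1$ for every $u>0$.

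With this set-up, the Lebesgue almost everywhere statement follows from Corollary \ref{thmmaincor} applied with $u = n-1$, noting that $\mathcal{H}^n$ is proportional to Lebesgue measure on $\rd$ and $\mathcal{H}^{n-1}$ is proportional to the Grassmannian measure on $G(n,n-1)$. For the exceptional set, Corollary \ref{thmmain2} applied with any $u > \theta(\ad F, n, n-1)$ yields an exceptional set of Hausdorff dimension at most $u+1$; taking the infimum over such $u$ gives the bound $1 + \theta(\ad F, n, n-1)$. The quantitative estimate then follows by plugging in \eqref{etaest} with $m = n-1$, which gives $\theta(s,n,n-1) \leq (n-1) - |n-1-s|$ and hence
\[
1 + \theta(\ad F, n, n-1) \leq n - |n-1-\ad F| = \min\{\ad F + 1, \, 2n-1-\ad F\}.
\]
I expect the absolute continuity verification to require the most care, but the radial geometry renders it essentially transparent because all fibres are one-dimensional lines through $z$; the remaining work is purely a matter of invoking the already-established corollaries.
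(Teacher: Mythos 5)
Your proof is correct and follows essentially the same route as the paper: realise the radial projections as a generalised family with $T_z(t) = \operatorname{span}(z-t)^\perp$, verify absolute continuity by observing the $T_z$-fibres are lines, and then invoke Corollaries \ref{thmmaincor} (with $u=n-1$) and \ref{thmmain2} (with $u>\theta(\ad F,n,n-1)$) together with \eqref{etaest}. The paper's proof is terser but uses exactly these ingredients; your polar-coordinate elaboration of the absolute-continuity step is a harmless expansion of the paper's one-line justification.
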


\begin{proof}
For all $z \in \rd$ and $t \neq z$, $\pi_t$ is smooth on $B(z,|z-t|/2)$ and 
\[
T_z(t) = \textup{ker}(J_z \pi_t)^\perp = \textup{span}(z-t)^\perp \in G(n, n-1).
\]
Since the preimage of $V \in G(n,n-1)$ under $T_z$ is again a line, the triple  $(\{\pi_t: t \in \rd\}, \mathcal{H}^{u+1}, \mathcal{H}^u)$ is  a generalised family of projections of $\rd$ of rank $n-1$.  The results follow by applying Corollary \ref{thmmaincor} (with $u=n-1$) and Corollary \ref{thmmain2} (with $u>\theta(\ad F, n, n-1)$, recalling \eqref{eta}).  The quantitative bound comes from \eqref{etaest}.
\end{proof}

We note that $S^{n-1}$ can be replaced by any smooth enough  $(n-1)$-dimensional `radially accessible' set.  More precisely, let $\mathcal{S} \subseteq \rd$ be a simply connected compact $(n-1)$-dimensional  $C^1$ manifold, with the property that for all $x \in \rd \setminus\{0\}$ the intersection
\[
 \{\lambda x : \lambda>0\} \cap \mathcal{S}
\]
is a singleton, which we denote by $\mathcal{S}(x)$.  Then the family of radial projections onto $\mathcal{S}$ with centre $t \in \rd$ given by
\[
\pi^\mathcal{S}_t(x) = \mathcal{S}(x-t)+t
\]
also satisfies the conclusion of Theorem \ref{radialthm1}.  Moreover, the exceptional set does  not  depend on $\mathcal{S}$ and so the conclusion holds for all $\mathcal{S}$ simultaneously.

We   obtain a sharp result concerning the dimension of the exceptional set in Theorem \ref{radialthm1} in the planar case  since $\theta(s,2,1)=0$ for all $s \in [0,2]$ by Orponen's projection theorem \cite{orponenassouad}.

\begin{cor}\label{radialthm}
Let $F \subseteq \rr$ be a non-empty bounded set.  Then
\[
\ad \pi_x( F) \geq  \min\{\ad F, \, 1\}
\]
for all  $x \in \rr$   outside of a set of exceptions of Hausdorff dimension at  most 1.
\end{cor}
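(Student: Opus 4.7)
The plan is to derive Corollary \ref{radialthm} directly from Theorem \ref{radialthm1} by specialising to $n = 2$ and invoking Orponen's planar projection theorem \cite[Theorem 1.1]{orponenassouad} to kill the error term in the exceptional set bound.

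More precisely, Theorem \ref{radialthm1} applied to a non-empty bounded $F \subseteq \mathbb{R}^2$ already gives
\[
\ad \pi_x(F) \geq \min\{\ad F, \, 1\}
\]
for all $x \in \mathbb{R}^2$ outside a set of Hausdorff dimension at most $1 + \theta(\ad F, 2, 1)$. The argument establishing Theorem \ref{radialthm1} has verified that the family $(\{\pi_t : t \in \mathbb{R}^2\}, \mathcal{H}^{u+1}, \mathcal{H}^u)$ satisfies the hypotheses of Definition \ref{defproj} for every $u>0$, with $T_z(t) = \textup{span}(z-t)^\perp$ and fibres that are lines through $z$; nothing new is needed on that front.

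To conclude, I would invoke Orponen's theorem, which gives $\hd \{V \in G(2,1) : \ad \pi_V(E) < \min\{\ad E, \, 1\}\} = 0$ for every non-empty $E \subseteq \mathbb{R}^2$, hence $\theta(s,2,1) = 0$ for every $s \in [0,2]$ by the definition \eqref{eta}. Substituting $\theta(\ad F, 2, 1) = 0$ into the bound above yields the desired exceptional set dimension of at most $1$. There is no real obstacle here: the heavy lifting is contained in Theorem \ref{radialthm1} and Orponen's theorem, so the proof is essentially a two-step packaging of these results, paralleling the way Theorem \ref{distmain0} is derived from Theorem \ref{distmain} and Corollary \ref{thmmain3} is derived from Corollary \ref{thmmain2}.
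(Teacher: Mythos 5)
Your proposal is correct and matches the paper's own argument exactly: the paper derives Corollary \ref{radialthm} by specialising Theorem \ref{radialthm1} to $n=2$ and substituting $\theta(\ad F, 2, 1) = 0$, which follows from Orponen's projection theorem, into the exceptional set bound $1 + \theta(\ad F, n, n-1)$. Nothing more is needed.
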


Corollary \ref{radialthm}  is clearly sharp since a line segment will radially project to a single point for all $t$ in the affine span of the line segment. 

\subsection{A sum-product theorem}

`Sum-product' results in additive combinatorics refer to a wide range of phenomena regarding the `independence' of multiplication and addition.  For example,  for a  set $F \subset (0,1)$, one cannot expect the product set
\[
FF = \{ xy : x, y \in F\}
\]
and the sumset
\[
F+F = \{ x+y : x,y \in F\}
\]
to be small simultaneously.     If $F$ is finite, then size means cardinality and this statement is made precise by the Erd\H{o}s-Szemer\'{e}di theorem.   If $F$ is infinite then it is natural to describe size in terms of  dimension.   The following is a sum-product type result for Assouad dimension, where we are also able to consider independence of other operations such as addition and exponentiation.  For $F \subseteq (0, \infty)$, we write
\[
F^F = \{x^y : x, y \in F\}.
\]

\begin{thm}\label{sumproductthm}
Let $F \subseteq \mathbb{R}$ be a non-empty bounded set with $\hd F >0$.  Then
\[
\ad (FF+F) \geq  \min\{2\ad F, \, 1\},
\]
and, if $F \subseteq (0,\infty)$,
\[
\ad (F^F+F) \geq  \min\{2\ad F, \, 1\}.
\]
\end{thm}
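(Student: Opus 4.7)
My plan is to view $FF + F$ (and $F^F + F$) as the image of $F \times F \subseteq \rr$ under a one-parameter family of nonlinear projections $\Pi_t$ indexed by $t \in F$, and apply Theorem \ref{thmmain} with $F \times F$ playing the role of the projected set.

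For the first estimate I take the linear family $\Pi_t(x, y) = tx + y$ (so that $t \in F$ gives $\Pi_t(F \times F) = tF + F \subseteq FF + F$). Its Jacobian $(t, 1)$ has constant rank one and
\[
T_z(t) \;=\; \textup{ker}(J_z \Pi_t)^\perp \;=\; \textup{span}(t, 1) \in G(2,1)
\]
is independent of $z$ and depends smoothly (in fact, bi-Lipschitzly on bounded sets) on $t$. Since $\hd F > 0$, Frostman's lemma furnishes a Borel measure $\mu$ supported on $F$ with $\mu(B(x,r)) \leq C r^s$ for some $s > 0$; pushing forward through $T$ yields a measure $\mathbb{P}$ on $G(2, 1)$ of the same Frostman exponent, and the absolute continuity $\mu \circ T_z^{-1} \ll \mathbb{P}$ is trivial, making $(\{\Pi_t\}, \mu, \mathbb{P})$ a generalised family of projections of $\rr$ of rank $1$. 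By Orponen's planar projection theorem \cite[Theorem 1.1]{orponenassouad} the exceptional set in $G(2, 1)$ has Hausdorff dimension zero, hence $\mathbb{P}$-measure zero, so for every $E \subseteq \rr$,
\[
\underset{V \sim \mathbb{P}}{\textup{essinf}} \; \ad \pi_V(E) \;\geq\; \min\{\ad E, \, 1\} \;\geq\; \min\{\hd E, \, 1\}.
\]
Applying Theorem \ref{thmmain} to $F \times F$ thus yields $\ad \Pi_t(F \times F) \geq \min\{\ad(F \times F), 1\}$ for $\mu$-a.e.\ $t \in F$, and since $\mu$ is nontrivial there is some $t \in F$ with $\ad(tF + F) \geq \min\{\ad(F \times F), 1\}$.

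To convert $\ad(F \times F)$ into $2 \ad F$ I invoke the weak-tangent characterisation of Assouad dimension \cite{mackaytyson}: for any $\epsilon > 0$ there is a weak tangent $E$ of $F$ with $\hd E \geq \ad F - \epsilon$, whence $E \times E$ is a weak tangent of $F \times F$ with $\hd(E \times E) \geq 2(\ad F - \epsilon)$, giving $\ad(F \times F) \geq 2 \ad F$. This completes the first claim. The $F^F + F$ claim follows by the same scheme with $\Pi_t(x, y) = t^x + y$ on $F \subseteq (0, \infty)$: the Jacobian $(t^x \log t, 1)$ has rank one away from $t = 1$, and although $T_{(x, y)}(t) = \textup{span}(t^x \log t, 1)$ now depends on the base point, the $t$-derivative $t^{x-1}(x \log t + 1)$ vanishes at most once per $z$, so $t \mapsto T_z(t)$ is still locally bi-Lipschitz and the choice $\mathbb{P} = \mathcal{H}^u$ for any $u \in (0, s)$ dominates all the Frostman pushforwards $\mu \circ T_z^{-1}$ uniformly in $z$.

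The main obstacle in both parts is the step $\ad(F \times F) \geq 2 \ad F$: Theorem \ref{thmmain} alone only gives $\min\{\ad(F \times F), 1\}$, and the product-of-weak-tangents construction is what bridges the gap to $\min\{2 \ad F, 1\}$. A secondary technical point in the exponential case is verifying the absolute-continuity condition for all base points $z$ simultaneously, which reduces to uniform control of $|\partial_t T_z|$ on compact parameter sets.
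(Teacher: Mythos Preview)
Your approach is essentially the paper's: same projection families $\Pi_t(x,y)=tx+y$ and $\Pi_t(x,y)=t^x+y$, same appeal to Orponen's theorem to kill the exceptional set, same containment $\Pi_t(F\times F)\subseteq FF+F$ (resp.\ $F^F+F$), and the same final input $\ad(F\times F)=2\ad F$. The paper simply cites Luukkainen \cite[Theorem~A.5(5)]{luk} for this last fact rather than redoing it via weak tangents, but your tangent argument is fine.

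There is, however, one genuine technical slip. You invoke Frostman's lemma to produce a measure $\mu$ supported on $F$, but the theorem is stated for an \emph{arbitrary} bounded set $F\subseteq\mathbb{R}$ with $\hd F>0$, with no Borel or analytic hypothesis, and Frostman's lemma can fail for non-analytic sets. The paper sidesteps this entirely: it takes $\mu=\mathcal{H}^s$ on $\Omega=\mathbb{R}$ (resp.\ $(0,\infty)$) and $\mathbb{P}=\mathcal{H}^u$ on $G(2,1)$, checks $\mathcal{H}^s\circ T_z^{-1}\ll\mathcal{H}^u$ for all $u>0$, and applies Corollary~\ref{thmmain3} to conclude that the exceptional set of parameters $t$ has Hausdorff dimension $0$. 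Since $\hd F>0$, monotonicity of Hausdorff dimension alone forces $F$ to contain a non-exceptional $t$ --- no Frostman measure needed, and no regularity on $F$ required. Your route can be repaired in exactly this way (and indeed in the exponential case you already drift towards $\mathbb{P}=\mathcal{H}^u$), so the fix is to drop the Frostman measure altogether and argue directly with Hausdorff measures as in Corollary~\ref{thmmain3}.
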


\begin{proof}
For $t \in \mathbb{R}$, consider the  family of projections $\Pi_t : \rr \to \mathbb{R}$ defined by
\[
\Pi_t(x,y) = tx+y.
\]
Applying Corollary  \ref{thmmain3} with $s=u$ to the cartesian product $F \times F = \{(x,y) : x,y \in F\}$ (not to be confused with $FF$) we get
\[
\hd \{ t : \ad \Pi_t(F \times F)< \min\{\ad (F \times F), \, 1\} \} = 0.
\]
Since $\hd F>0$, there must exist $t \in F$ such that 
\[
\ad \Pi_t(F \times F) \geq  \min\{\ad (F \times F), \, 1\} = \min\{2\ad F, \, 1\}.
\]
The result follows since $\Pi_t(F \times F) = tF+F \subseteq FF+ F$. The fact that $\ad (F \times F) =  2\ad F$ can be found in, for example, \cite[Theorem A.5 (5)]{luk}.  The second result is proved similarly, but  the details are more involved. For $t > 0$, consider the  family of projections $\Pi_t : \rr \to \mathbb{R}$ defined by   
\[
\Pi_t(x,y) = t^x+y.
\]
Here
\[
T_{(x,y)}(t) = \textup{ker}(J_{(x,y)} \Pi_t)^\perp = \textup{span}\Big(1,\frac{t^{-x}}{\log(t)}\Big) \in G(2, 1)
\]
is defined for all $t > 0$ (with the obvious interpretation  $\textup{span}(1, -\infty)=\textup{span}(0,1)$ when $t=1$).  Although $T_{(x,y)}:(0,\infty) \to G(2,1)$  is not generally surjective or injective, we still have
\[
\mathcal{H}^s \circ T_{(x,y)}^{-1} \ll \mathcal{H}^s
\]
for all $s>0$.  Therefore, by applying Corollary \ref{thmmain3} to $F \times F$,
\[
\hd \{ t > 0 : \ad \Pi_t(F \times F)< \min\{\ad (F \times F), \, 1\} \} = 0.
\]
Since $\hd F>0$ and $F \subseteq [0,\infty)$, there must exist $t \in F$ such that 
\[
\ad \Pi_t(F \times F) \geq  \min\{\ad (F \times F), \, 1\} = \min\{2\ad F, \, 1\}.
\]
The result follows since $\Pi_t(F \times F) = t^F+F \subseteq F^F+ F$.
\end{proof}

This example was partly motivated by Orponen's paper \cite{orponenAD}.  Orponen \cite[Corollary 1.5]{orponenAD} proved that if $F \subseteq \mathbb{R}$ is compact, Ahlfors-David regular, and has $\hd F \geq 1/2$, then
\[
\pd (FF+FF-FF-FF) = 1,
\]
where $\pd$ denotes packing dimension.  We are able to provide a much stronger result, but with packing dimension replaced by Assouad dimension.  Notably, the set $F$ need not be Ahlfors-David regular, we consider the much smaller set $FF+F$, and we obtain estimates for sets with arbitrarily small dimension.  We note that since the family of projections used to handle $FF+F$ in Theorem \ref{sumproductthm} are orthogonal, this result could be deduced directly from  Orponen's projection theorem.  The set $F^F+F$ requires our nonlinear theorem, however.  Finally, we observe that many other sets constructed from $F$ can be handled in this way --- or even sets constructed from a collection of sets, rather than the single set $F$.  We leave the details to the interested reader.  

\subsection{Dimension of sumsets}

As a final application we revisit one of the situations where Peres and Schlag \cite{peresschlag} were able to apply their nonlinear projection theorem.  Given two non-empty sets $E,F \subseteq \mathbb{R}$ with sufficient `arithmetic independence', one might hope for $\dim (E+F) = \min\{ \dim E + \dim F, \, 1\}$.  This can fail for many reasons but if we parameterise $F$ in a transversal enough way, then we can recover this formula generically.  Following \cite{peresschlag}, for $\lambda \in (0,1/2)$ we let
\[
F_\lambda = \bigg\{ \sum_{n\geq 1} i_n\lambda^n :  i_n \in \{0,1\}\bigg\}
\]
and consider $E+F_\lambda$ for generic $\lambda$.  For all $\lambda \in (0,1/2)$, $F_\lambda$ is a compact self-similar Cantor set with $\hd F_\lambda = \ad F_\lambda = -\log 2/\log\lambda $. The following result also holds for more general homogeneous Cantor sets, but we omit the details.

\begin{thm} \label{sumset}
Let $E \subseteq \mathbb{R}$ be non-empty.  Then, for almost all $\lambda \in (0,1/2)$,
\[
\ad (E+F_\lambda) \geq   \min\{\ad E+ \ad F_\lambda, \, 1\}.
\]
Moreover, the set of exceptional $\lambda$ in a given interval $(a,b)  \subseteq (0,1/2)$ for which this does not hold has Hausdorff dimension at most $\ad E+\ad F_b $. 
\end{thm}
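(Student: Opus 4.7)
Fix any subinterval $(a,b) \subseteq (0,1/2)$, take $\Omega = (a,b)$, and set $F = E \times F_b \subseteq \rr$. By the standard product formula for Assouad dimension (see, e.g., \cite[Theorem A.5(5)]{luk}), $\ad F = \ad E + \ad F_b$. The strategy is to construct a family $\{\Pi_\lambda \colon \rr \to \rr\}_{\lambda \in \Omega}$ of rank-$1$ maps with $\Pi_\lambda(F) \subseteq E + F_\lambda$ for every $\lambda$, and then apply Corollary \ref{thmmaincor} (with $\mu$ Lebesgue measure and $\mathbb{P}$ the Grassmannian measure on $G(2,1)$) for the Lebesgue-a.e.\ claim, and Corollary \ref{thmmain3} (with $s = \ad E + \ad F_b$ and all $u > 0$) for the exceptional-set bound. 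Since $\ad F_b \geq \ad F_\lambda$ whenever $\lambda \leq b$, the resulting bound $\min\{\ad E + \ad F_b,\,1\}$ dominates $\min\{\ad E + \ad F_\lambda,\,1\}$ and gives both conclusions; the $\mathcal{L}$-a.e.\ statement on the whole of $(0,1/2)$ then follows by letting $b \nearrow 1/2$ and exhausting.

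The natural candidate is $\Pi_\lambda(x,y) = x + \tau_\lambda(y)$, where $\tau_\lambda \colon \mathbb{R} \to \mathbb{R}$ is a $C^1$ extension of the coding bijection $\sum_n i_n b^n \mapsto \sum_n i_n \lambda^n$ from $F_b$ onto $F_\lambda$. For $\lambda < b$ this coding bijection is H\"{o}lder of exponent $\log\lambda/\log b > 1$ on $F_b$, so a $C^1$ extension to all of $\mathbb{R}$ exists by Whitney's extension theorem and one then has $\Pi_\lambda(F) = E + F_\lambda$. With this choice $\Pi_\lambda$ has constant rank $1$ everywhere, with $T_z(\lambda) = \textup{span}(1,\tau_\lambda'(y_0))$ at $z = (x,y_0)$, so conditions (1) and (2) of Definition \ref{defproj} are clear. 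What remains is condition (3): the absolute continuity of $\mu \circ T_z^{-1}$ with respect to $\mathbb{P}$ for every $z$ and for both measure pairs above.

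\textbf{Main obstacle.} Verifying (3) at $z = (x,y_0)$ with $y_0 \in F_b$ is the heart of the matter. Any $C^1$ function on $\mathbb{R}$ whose restriction to $F_b$ is H\"{o}lder of exponent strictly larger than $1$ is forced to satisfy $\tau_\lambda'(y_0) = 0$ at every $y_0 \in F_b$, so the naive Whitney extension makes $T_z$ constant in $\lambda$ on $F$ and condition (3) fails outright. The construction therefore has to be engineered so that $\tau_\lambda'(y_0)$ genuinely varies with $\lambda$---most plausibly by exploiting the self-similar structure of $F_b$ and $F_\lambda$ so that the contribution of $\lambda$ lives in the ambient derivative rather than purely along $F_b$, or by replacing the product $E \times F_b$ by a less rigid encoding of $E \times \{0,1\}^{\mathbb{N}}$ that admits a smoother $\lambda$-family of projections. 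The underlying analytic input in either variant is the classical Peres--Schlag transversality estimate for the analytic family $\lambda \mapsto \phi_\lambda(i) - \phi_\lambda(j)$ with $i \neq j$; the technical challenge is to repackage that estimate into the absolute continuity form demanded by Definition \ref{defproj}(3).
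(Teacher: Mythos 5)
Your proposal doesn't match the paper's proof and, as you yourself observe, it has a gap that the proposed framing cannot close. The obstruction you identify is real and fatal to the route you chose: any $C^1$ extension $\tau_\lambda$ of the coding bijection $F_b \to F_\lambda$ (with $\lambda < b$) must satisfy $\tau_\lambda'(y_0) = 0$ at every $y_0 \in F_b$, since $y_0$ is an accumulation point of $F_b$ and the Hölder exponent exceeds $1$, forcing the difference quotients along $F_b$ to vanish. Consequently $T_z(\lambda) = \textup{span}(1,0)$ for every $\lambda$ and every $z \in E \times F_b$, so $\mu \circ T_z^{-1}$ is a Dirac mass and Definition \ref{defproj}(3) fails outright. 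No choice of $\mathbb{P}$ rescues this; the map $T_z$ is not merely insufficiently spread, it is constant. Your closing suggestion (re-encode $E \times \{0,1\}^{\mathbb{N}}$ differently, or extract the $\lambda$-dependence from the ambient derivative) is left as an open challenge rather than resolved, so the argument as written does not prove the theorem.

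The paper sidesteps this entirely by \emph{not} invoking Theorem \ref{thmmain} or Corollaries \ref{thmmaincor}/\ref{thmmain3} for this result. Instead it works directly with the tangent machinery. One fixes a single orthogonal projection $V = \textup{span}(1,1)$ (so that $\pi_V(A \times B)$ is an affine copy of $A + B$), takes a simple tangent $E'$ to $E$ with $\hd E' = \ad E$ via Theorem \ref{goodweaktangent}, and observes that $E' \times F_\lambda$ is a simple tangent to $E \times F_\lambda$ for each $\lambda$ (using that $F_\lambda$ is a simple tangent to itself at $0$). Proposition \ref{keylemma} applied to the single map $\pi_V$ then gives that $\pi_V(E' \times F_\lambda)$ is a tangent to $\pi_V(E \times F_\lambda)$, hence by Theorem \ref{weaktangent}
\[
\ad(E + F_\lambda) \geq \hd(E' + F_\lambda)
\]
for every $\lambda$. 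The parameter dependence and the exceptional-set bound are then imported wholesale from the Peres--Schlag theorem on $\hd(E' + F_\lambda)$, applied to the fixed set $E'$. In other words, the paper uses the tangent apparatus only to upgrade a Hausdorff-dimension sumset estimate to an Assouad-dimension one for a fixed tangent $E'$, and lets the existing transversality result do all the work in $\lambda$. Your instinct that Peres--Schlag transversality is ``the underlying analytic input'' is correct, but the right way to deploy it here is to reduce to their theorem after passing to a tangent, not to repackage their estimate as absolute continuity of a family of derivative maps---which, as you found, cannot be done with a genuinely $C^1$ parametrisation of this form.
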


One of the distinguishing features of this result is that the generic dimension bound depends on the parameter $\lambda$.   The proof will be a straightforward combination of our approach and the result of Peres and Schlag.  Nevertheless, we delay the proof until Section \ref{sumsetproof}.






\section{Proofs of nonlinear projection theorems}

\subsection{Tangents}

The tangent structure of a set is intimately  related to the Assouad dimension and it is via the tangent structure that we will prove Theorem \ref{thmmain}.  Mackay and Tyson \cite{mackaytyson} pioneered the theory of \emph{weak tangents} in the context of Assouad dimension.  Weak tangents are limits of sequences of blow-ups of a given set with respect to the Hausdorff metric.  Rather than use weak tangents directly, it is more convenient for us to use the non-symmetric Hausdorff distance defined by
\[
\rho_\mathcal{H}(A,B) = \sup_{a \in A} \inf_{b \in B} |a-b|
\]
for non-empty closed sets $A,B \subseteq \rd$.  The Hausdorff metric  is then defined as 
\[
d_\mathcal{H}(A,B) = \max\{\rho_\mathcal{H}(A,B), \rho_\mathcal{H}(B,A)\}
\]
for non-empty compact sets $A,B \subseteq \rd$.  In what follows we choose to approximate using $\rho_\mathcal{H}$ rather than $d_\mathcal{H}$.  An alternative would have been to approximate using $d_\mathcal{H}$ via \emph{subsets}, but we found this more cumbersome. This approach was used, for example, in \cite[Definition 3.6]{fraserrobinson} with the terminology \emph{weak pseudo tangent}.  Another minor variation we make on the usual theory of weak tangents is to allow some flexibility in the blow-ups: they need not be via strict similarities.    This approach was used, for example, in \cite[Proposition 7.7]{fraserassouad} with the terminology \emph{very weak tangents}. To simplify exposition and terminology, we simply refer to \emph{tangents}. We write $B(x,r)$ for the closed ball centred at $x \in \rd$ with radius $r>0$.

\begin{defn}\label{weaktangentdef}  
Let $E, F \subseteq \mathbb{R}^n$ be closed sets with $E \subseteq B(0,1)$.  Suppose there exists a sequence of maps $S_k: \mathbb{R}^n \to \mathbb{R}^n$ and  constants  $a_k, b_k >0$ with $\sup_k (b_k/a_k) <\infty$ such that
\[
a_k|x-y| \leq |S_k(x)-S_k(y)|  \leq b_k|x-y|
\]
for all $x,y \in S_k^{-1}(B(0,1))$ and suppose that
\[
\rho_\mathcal{H}(E, S_k(F)) \to 0
\]
as $k \to \infty$.  Then we call $E$ a   \emph{tangent} to $F$.  If each $S_k$ is a homothety, that is, $S_k(x) = c_k x + t_k$ for some $c_k>0$ and $t_k \in \rd$, and  $c_k \to \infty$, then we call $E$ a \emph{simple   tangent} to $F$.
\end{defn}

The maps $S_k$ in Definition \ref{weaktangentdef} blow-up the set $F$ around $z_k = S_k^{-1}(0)$.  If the limit $z = \lim_{k \to \infty} z_k \in \rd$ exists, then we call $z$ the \emph{focal point} of $E$.  Note that if $F$ is compact and $E$ is a simple  tangent to $F$, then we may assume (by taking a subsequence if necessary) that the focal point exists and, moreover, is a point in $F$. The following is a minor variant on a result of Mackay and Tyson \cite[Proposition 6.1.5]{mackaytyson}.

\begin{thm}\label{weaktangent}
Let $F \subseteq \mathbb{R}^d$ be closed and $E \subseteq \mathbb{R}^d$ be  a   tangent to $F$.   Then $\dim_\mathrm{A} F \geq \dim_\mathrm{A} E$.
\end{thm}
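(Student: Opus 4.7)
The strategy is to use the combinatorial definition of Assouad dimension directly: transport an efficient cover of an appropriate piece of $F$ through the maps $S_k$ to produce an efficient cover of an arbitrary piece of $E$, with the distortion ratio $b_k/a_k$ absorbed into a uniform multiplicative constant.

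Fix any $\alpha > \dim_\mathrm{A} F$. By the definition of Assouad dimension there is a constant $C>0$ such that, for every $p \in F$ and $0<\rho<R'$, the set $F \cap B(p,R')$ can be covered by at most $C(R'/\rho)^\alpha$ sets of diameter $\rho$. Set $M := \sup_k (b_k/a_k) < \infty$. Fix $x \in E$ and scales $0<r<R$; since $E \subseteq B(0,1)$ only small scales are relevant, so we may assume $B(x,R) \subseteq B(0,1)$. Using $\delta_k := \rho_\mathcal{H}(E, S_k(F)) \to 0$, for all large $k$ we have $\delta_k < r/4$ and
\[
E \cap B(x,R) \ \subseteq \ \bigl( S_k(F) \cap B(x, R + \delta_k) \bigr)_{\delta_k},
\]
the $\delta_k$-neighbourhood of the right-hand set.

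Now pull back by $S_k^{-1}$. Let $y_k = S_k^{-1}(x)$; the lower Lipschitz bound $a_k$ gives $S_k^{-1}(B(x, R+\delta_k)) \subseteq B(y_k, 2R/a_k)$ for $k$ large. Apply the Assouad covering estimate for $F$ at the centre $y_k$, radius $2R/a_k$, and scale $(r/2)/b_k$: one obtains a cover of $F \cap B(y_k, 2R/a_k)$ by at most
\[
C \left( \frac{2R/a_k}{(r/2)/b_k} \right)^{\!\alpha} \ = \ C (4M)^\alpha \left(\frac{R}{r}\right)^{\!\alpha}
\]
sets of diameter $(r/2)/b_k$. Pushing these forward by $S_k$ (which is $b_k$-Lipschitz on $S_k^{-1}(B(0,1))$) produces a cover of $S_k(F) \cap B(x, R+\delta_k)$ by the same number of sets of diameter at most $r/2$. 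Inflating each of these sets by $\delta_k < r/4$ yields sets of diameter at most $r$, and covers the $\delta_k$-neighbourhood, hence covers $E \cap B(x,R)$. Since the total number of sets, $C(4M)^\alpha (R/r)^\alpha$, has exponent $\alpha$ and a constant independent of $x, r, R, k$, we conclude $\dim_\mathrm{A} E \leq \alpha$, and letting $\alpha \downarrow \dim_\mathrm{A} F$ gives the result.

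\textbf{Main obstacle.} The only delicate point is that the tangent definition allows $S_k$ to be genuinely bi-Lipschitz (not a similarity) and uses the asymmetric distance $\rho_\mathcal{H}$; both features threaten the covering count. The hypothesis $\sup_k(b_k/a_k) < \infty$ is exactly what absorbs the bi-Lipschitz distortion into a constant, and choosing to cover at scale $r/2$ rather than $r$ leaves enough slack to inflate by $\delta_k \to 0$ without inflating the exponent. Beyond this, the proof is a direct adaptation of \cite[Proposition 6.1.5]{mackaytyson}.
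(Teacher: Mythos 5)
The approach is right and is essentially the standard cover-transport argument from Mackay and Tyson \cite[Proposition 6.1.5]{mackaytyson}, which is exactly what the paper cites for this statement (the paper does not include its own proof). The reduction to $B(x,R)\subseteq B(0,1)$, the use of $\rho_{\mathcal{H}}$ in the correct direction (every point of $E$ is close to $S_k(F)$, so covers of pieces of $F$ transport to covers of pieces of $E$), the $b_k/a_k$ bookkeeping and the final count are all fine.

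There is, however, a genuine gap in the step where you ``apply the Assouad covering estimate for $F$ at the centre $y_k$.'' You set $y_k := S_k^{-1}(x)$ for $x\in E$, but this point causes two problems. First, nothing in Definition \ref{weaktangentdef} makes $S_k$ surjective onto $B(0,1)$; it is only bi-Lipschitz on the set $S_k^{-1}(B(0,1))$, so $S_k^{-1}(\{x\})$ may be empty. Second, and more importantly, even when $y_k$ exists it need not lie in $F$, yet the covering bound you quoted is only available for balls \emph{centred at points of $F$}. As written the step fails. The fix is short: since $\rho_{\mathcal{H}}(E,S_k(F))=\delta_k\to 0$, choose $p_k\in F$ with $|S_k(p_k)-x|<\delta_k$; then $p_k\in S_k^{-1}(B(0,1))$, and the lower Lipschitz bound gives $S_k^{-1}\bigl(B(x,R+\delta_k)\bigr)\subseteq B(p_k,\,(R+2\delta_k)/a_k)\subseteq B(p_k,\,2R/a_k)$ for large $k$. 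Now apply the Assouad bound at the genuine centre $p_k\in F$, and the rest of your count goes through unchanged (the constant only picks up another absolute factor). With this recentred pullback ball, the argument is correct.
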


The following result of K\"aenm\"aki, Ojala and Rossi \cite[Proposition 5.7]{anti3} shows that Theorem \ref{weaktangent} has a useful converse.

\begin{thm} \label{goodweaktangent}
Let $F \subseteq \mathbb{R}^d$ be closed and non-empty.  Then there exists a compact set  $E \subseteq \mathbb{R}^d$ with $\hd E = \ad F$ such that $E$  is a simple   tangent to $F$. 
\end{thm}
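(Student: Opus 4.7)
The plan is to build $E$ as a Hausdorff limit of rescaled pieces of $F$ that witness $s := \ad F$ uniformly across scales. Since Theorem \ref{weaktangent} immediately gives $\hd E \le \ad E \le s$ once $E$ is known to be a tangent, the substance of the argument is the lower bound $\hd E \ge s$. The case $s = 0$ is trivial (take any singleton in $F$), so assume $s > 0$.

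First I would extract, directly from the definition of the Assouad dimension, a point $x_n \in F$ and scales $0 < r_n < R_n$ with $R_n \to 0$ and $R_n / r_n \to \infty$ such that
\[
N_{r_n}(F \cap B(x_n, R_n)) \ge n \, (R_n/r_n)^{s - 1/n},
\]
where $N_r$ denotes the least number of closed $r$-balls needed to cover the set. Witnesses with $R_n \to 0$ can be produced in the compact case by a covering argument: if only scales $R \ge \delta$ witnessed the dimension, a uniform small-scale bound combined with a $(\delta/2)$-ball cover of $B(x,R)$ would force the global Assouad exponent below $s$. The general closed case reduces to this by restricting to a bounded piece of $F$ with the same Assouad dimension. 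The condition $R_n \to 0$ is precisely what will make $E$ a \emph{simple} tangent. Next I would uniformise this covering lower bound across sub-balls: a pigeonhole argument over a dyadic decomposition of $B(x_n, R_n)$ yields a sub-ball $B(x_n', R_n') \subseteq B(x_n, R_n)$ on which
\[
N_{r_n}(F \cap B(y, \sigma)) \ge c \, (\sigma/r_n)^{s - \epsilon_n}
\]
holds simultaneously for all $y \in B(x_n', R_n')$ and $r_n \le \sigma \le R_n'$, with absolute $c > 0$ and $\epsilon_n \to 0$. Rescaling by the homothety $S_n(y) = (y - x_n')/R_n'$ (whose scaling constant $1/R_n'$ tends to $\infty$) and setting $F_n := S_n(F) \cap B(0,1)$, Blaschke's selection theorem produces a subsequence converging in Hausdorff metric to a compact set $E \subseteq B(0, 1)$ that, by construction, is a simple tangent to $F$.

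To prove $\hd E \ge s$, I would construct a Frostman measure on $E$ via weak-$*$ limits. On each $F_n$, let $\mu_n$ be the uniform probability measure on a maximal $(r_n/R_n')$-separated subset; the uniform covering lower bound translates, after rescaling, to
\[
\mu_n(B(y, \rho)) \le C \, \rho^{s - \epsilon_n} \qquad (r_n/R_n' \le \rho \le 1).
\]
Weak-$*$ compactness on $B(0, 1)$ together with the Portmanteau theorem then yield a Borel probability measure $\mu$ supported on $E$ satisfying $\mu(B(y, \rho)) \le C \rho^s$ for all $y$ and small $\rho > 0$, and Frostman's lemma delivers $\hd E \ge s$.

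The hardest step will be the uniformisation: one must arrange the pigeonhole so that the exponent $s - \epsilon_n$ is achieved \emph{simultaneously} at every sub-ball within $B(x_n', R_n')$, not merely on average, while preserving both $R_n' \to 0$ (so that $E$ is a simple tangent) and $r_n/R_n' \to 0$ (so that the limit retains information at arbitrarily small scales). A secondary subtlety is verifying $\mathrm{supp}(\mu) \subseteq E$ and that $\mu$ is a probability measure, which is routine given the Hausdorff convergence $F_n \to E$ and the compactness of $B(0, 1)$.
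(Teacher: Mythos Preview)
The paper does not prove this theorem; it is quoted as \cite[Proposition 5.7]{anti3} (K\"aenm\"aki--Ojala--Rossi) with no argument given. So there is no ``paper's own proof'' to compare against, only the cited literature, and your sketch is in the same spirit as the proof there: extract approximate $s$-regularity at a sequence of locations and scales, blow up, pass to a Hausdorff limit, and certify $\hd E \ge s$ via a Frostman measure obtained as a weak-$*$ limit. Your identification of the uniformisation step as the crux is accurate; in \cite{anti3} this is handled by a dyadic tree/pigeonhole argument close to what you describe.

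One genuine gap in your reduction: you write that ``the general closed case reduces to [the compact case] by restricting to a bounded piece of $F$ with the same Assouad dimension''. This is false as stated. For $F = \mathbb{Z}$ one has $\ad \mathbb{Z} = 1$ but every bounded piece is finite with Assouad dimension $0$, and indeed every \emph{simple} tangent to $\mathbb{Z}$ (which by definition requires blow-up ratios $c_k \to \infty$) is a singleton. So either the statement of Theorem~\ref{goodweaktangent} is being tacitly read for bounded $F$ (which is all the paper ever uses---see the hypotheses of Theorem~\ref{thmmain} and the proof in Section~\ref{proof1}), or one must drop the requirement $c_k \to \infty$ in the definition of simple tangent. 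Your argument for forcing $R_n \to 0$ is fine in the bounded case, but the passage from closed to bounded needs a different justification (or should simply be replaced by the assumption that $F$ is bounded, which suffices for every application in the paper).
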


\subsection{Orthogonal projections of tangents are tangents of nonlinear projections}

The key technical result required to prove Theorem \ref{thmmain} is the following proposition.  It states that there is an appropriately chosen orthogonal projection of a simple   tangent, which is a   tangent to a given nonlinear projection.

\begin{prop} \label{keylemma}
Let $F \subseteq \rd$ be non-empty and compact.  Suppose $E$ is a simple  tangent to $F$  with focal point $z \in F$.  Further suppose that $t \in \Omega$ is such that $\Pi_t$ is $C^1$ and of constant rank $m \geq 1$ in a neighbourhood of $z$.  Then  $\pi_V(E)$ is a   tangent to $\Pi_t(F)$ for $V=\textup{ker}(J_z \Pi_t)^\perp \in G(n,m)$.
\end{prop}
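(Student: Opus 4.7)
The plan is to linearise $\Pi_t$ at the focal point $z$ using the Jacobian $L := J_z\Pi_t$ and to construct, in the target, a family of affine blow-ups $T_k$ that convert the given simple tangent approximation $\rho_\mathcal{H}(E, S_k(F)) \to 0$ into $\rho_\mathcal{H}(\pi_V(E), T_k(\Pi_t(F))) \to 0$.

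First I would fix notation: since $E$ is a simple tangent with focal point $z$, write $S_k(x) = c_k(x - z_k)$ with $c_k \to \infty$ and $z_k \to z$.  Set $W := L(\rd)$; by rank--nullity combined with $V = \ker(L)^\perp$, the restriction $L_0 := L|_V : V \to W$ is a linear isomorphism.  I would then fix any linear isomorphism $A : \rd \to \rd$ extending $L_0^{-1}$ from $W$ to the whole space (e.g.\ pick any isomorphism $W^\perp \to V^\perp$ and glue), and take as candidate target blow-ups the affine maps
\[
T_k(y) := c_k\, A(y - \Pi_t(z_k)).
\]
Each $T_k$ is affine linear with $k$-independent linear factor $A$, so the two-sided Lipschitz estimate $c_k\,\|A^{-1}\|^{-1}|y - y'| \le |T_k(y) - T_k(y')| \le c_k\|A\||y - y'|$ holds globally with ratio $\|A\|\|A^{-1}\|$ independent of $k$, fulfilling the bi-Lipschitz condition in Definition \ref{weaktangentdef}.

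The main computation is to show that $T_k(\Pi_t(F))$ approximates $\pi_V(E)$ in $\rho_\mathcal{H}$.  Given $e \in E$, choose $x_k \in F$ with $|S_k(x_k) - e| \to 0$ (available since $\rho_\mathcal{H}(E, S_k(F)) \to 0$), so that $x_k = z_k + (e + \xi_k)/c_k$ for some $\xi_k \to 0$.  Since $x_k, z_k \to z$ and $\Pi_t$ is $C^1$ near $z$, integrating $J\Pi_t$ along the segment from $z_k$ to $x_k$ yields
\[
\Pi_t(x_k) - \Pi_t(z_k) = L(x_k - z_k) + \eta_k, \qquad |\eta_k| = o(|x_k - z_k|) = o(1/c_k).
\]
Noting that $L(e) = L_0(\pi_V(e))$ because $\pi_{V^\perp}(e) \in \ker L$, so that $A(L(e)) = L_0^{-1}(L_0(\pi_V(e))) = \pi_V(e)$, a direct expansion gives
\[
T_k(\Pi_t(x_k)) = \pi_V(e) + A(L(\xi_k)) + c_k A(\eta_k) = \pi_V(e) + o(1).
\]
Taking the supremum over $e \in E$ yields $\rho_\mathcal{H}(\pi_V(E), T_k(\Pi_t(F))) \to 0$, and combining with the bi-Lipschitz bounds above proves that $\pi_V(E)$ is a tangent to $\Pi_t(F)$.

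The only delicate point is that the $o(1/c_k)$ remainder $\eta_k$ must be controlled \emph{uniformly} in $e \in E$, so that the final $o(1)$ in the display is genuine rather than pointwise.  This is where the continuous differentiability hypothesis on $\Pi_t$ is essential: continuity of $J\Pi_t$ on a fixed neighbourhood of $z$ supplies a modulus of continuity $\omega$ yielding $|\eta_k| \le \omega(|x_k - z_k| + |z_k - z|)\cdot|x_k - z_k|$, and since both $|z_k - z|$ and $\sup_{e \in E} |x_k - z_k|$ tend to zero as $k \to \infty$, the bound $c_k|\eta_k| \to 0$ holds uniformly, as needed.
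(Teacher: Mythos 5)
Your proof is correct, but it takes a genuinely different route from the paper's. The paper constructs the target blow-ups by locally inverting $\Pi_t$: it defines $U_k^0(u)$ as the unique point of $\Pi_t^{-1}(u)\cap(V_k+z_k)\cap B(z_k,2c_k^{-1})$ (where $V_k=\ker(J_{z_k}\Pi_t)^\perp$) and then sets $U_k=S_k\circ U_k^0$, so the tangent maps live on the domain side and the argument requires an implicit-function-theorem lemma to show $U_k^0$ is well-defined, plus a bi-Lipschitz estimate for $U_k^0$ derived from quantitative differentiability bounds. You instead build the blow-ups directly on the target side as affine maps $T_k(y)=c_kA(y-\Pi_t(z_k))$, with $A$ a fixed linear isomorphism inverting $L=J_z\Pi_t$ on its image. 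This sidesteps both the implicit function theorem and the well-definedness lemma, and the bi-Lipschitz condition in Definition \ref{weaktangentdef} holds for free since $T_k$ has $k$-independent linear part $c_kA$. What is left is exactly one estimate — the first-order Taylor expansion of $\Pi_t$ around $z_k$ with a remainder uniform over $e\in E$ — and you correctly identify this uniformity as the only delicate point and handle it via the modulus of continuity of $J\Pi_t$ near $z$, together with $|z_k-z|\to 0$ and $\sup_{e}|x_k-z_k|\lesssim c_k^{-1}$. Note also that your argument linearises at the fixed point $z$ (using $L=J_z\Pi_t$) rather than at the moving base points $z_k$ as the paper does; this is cleaner and is what makes the target blow-up $k$-independent in its linear part. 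Both approaches yield a valid constant bi-Lipschitz ratio ($\|A\|\,\|A^{-1}\|$ for you; $8(2+c)/c$ in the paper), and both give the same conclusion.
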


Before proving Proposition \ref{keylemma}, we provide some preliminary results. We may assume for convenience that $E \subseteq B(0,1/2)$.  Let $S_k$ be a sequence of homothetic similarities of $\rd$ such that
\[
\rho_\mathcal{H}(E, S_k(F) ) \to 0.
\]
  Write $c_k>0$ for the similarity ratio of $S_k$ and $t_k \in \rd$ for the associated translation.  Let $z_k \in F$ be such that $S_k(B(z_k,c_k^{-1})) = B(0,1)$ and $z = \lim_{k \to \infty} z_k \in F$ be the focal point of $E$.  (Note that $0 = S_k(z_k) = c_kz_k+t_k$.)   Let $V=\textup{ker}(J_z \Pi_t)^\perp$ and $V_k=\textup{ker}(J_{z_k} \Pi_t)^\perp$, noting that $V_k, V \in G(n,m)$ for large enough $k$ by the differentiability assumption.  Moreover, $V_k \to V$ in the \emph{Grassmannian metric} $d_G$, defined by
\[
d_G(U,U') = d_{\mathcal{H}} \Big(U \cap B(0,1),\,  U' \cap B(0,1) \Big)
\]
for $U,U' \in G(n,m)$.  This convergence is guaranteed by the assumption that $\Pi_t $ is continuously differentiable  in a neighbourhood of $z$, and therefore $\textup{ker}(J_{z'} \Pi_t)^\perp$ varies continuously for  $z'$ sufficiently close to $z$.

  There exists a  constant $c=c(z,t)>0$ such that, for all   $k$ sufficiently large and  all $x,y \in \textup{ker}(J_{z_k} \Pi_t)^\perp$,
\begin{equation} \label{eccentric}
|(J_{z_k} \Pi_t)(x) - (J_{z_k} \Pi_t)(y)| \geq c \|J_{z_k} \Pi_t \| |x-y|,
\end{equation}
where $\| \cdot \|$ denotes the operator norm. This can be guaranteed since $z_k \to z$, $\Pi_t$ is  continuously differentiable in a neighbourhood of $z$, and $J_{z} \Pi_t$ is injective and linear on $\textup{ker}(J_{z} \Pi_t)^\perp$. 

Fix  $\eps \in (0,1/10)$ satisfying 
\begin{equation} \label{epssmall}
0<\eps<(c/8) \|J_z\Pi_t\|
\end{equation}
where $c=c(z,t)>0$ is the constant from \eqref{eccentric}. This is not an issue since $z$ and $t$ are fixed.

Define $U_k: \Pi_t(B(z_k,c_k^{-1})) \to \rd$ by  $U_k = S_k \circ U_k^0$ where $U_k^0:\Pi_t(B(z_k,c_k^{-1}))  \to B(z_k, 2c_k^{-1})$ is defined by letting $U_k^0(u)$  be the unique point in the intersection
\[
\Pi^{-1}_t(u) \cap (V_k+z_k) \cap B(z_k,2c_k^{-1}).
\]
\begin{lma}
The map $U_k^0$ is well-defined for sufficiently large $k$.
\end{lma}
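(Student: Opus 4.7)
The plan is to prove the claim by a quantitative implicit function theorem argument, using the constant rank hypothesis for $\Pi_t$ near $z$ together with the lower bound \eqref{eccentric} and the smallness condition \eqref{epssmall}. First, since $z_k \to z$, $c_k^{-1} \to 0$, and $\Pi_t$ is continuously differentiable in a neighbourhood of $z$, for all $k$ sufficiently large the ball $B(z_k, 2c_k^{-1})$ lies in a neighbourhood of $z$ on which $\|J_{z'} \Pi_t - J_{z_k} \Pi_t\| < \eps$ for every $z'$ in the ball. This uniform control will be used throughout.

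For uniqueness, suppose $y_1, y_2 \in (V_k + z_k) \cap B(z_k, 2c_k^{-1})$ both satisfy $\Pi_t(y_i) = u$. Setting $v := y_1 - y_2 \in V_k$ and applying the fundamental theorem of calculus along the segment from $y_2$ to $y_1$, the $\eps$-control of Jacobians yields $|J_{z_k}\Pi_t(v)| \leq \eps|v|$. By \eqref{eccentric}, $|J_{z_k}\Pi_t(v)| \geq c\|J_{z_k}\Pi_t\||v|$, and since $\|J_{z_k}\Pi_t\| \to \|J_z\Pi_t\|$ while $\eps < (c/8)\|J_z\Pi_t\|$ by \eqref{epssmall}, this forces $v = 0$ for $k$ large.

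For existence, given $u = \Pi_t(x)$ with $x \in B(z_k, c_k^{-1})$, decompose $x - z_k = v_0 + w_0$ with $v_0 \in V_k$ and $w_0 \in V_k^\perp = \ker J_{z_k}\Pi_t$. Since $J_{z_k}\Pi_t$ annihilates $w_0$, a Taylor estimate yields
\[
\bigl|\Pi_t(z_k + v_0) - u\bigr| \leq \eps|w_0| \leq \eps c_k^{-1},
\]
so $y_0 := z_k + v_0$ is an approximate preimage lying within $c_k^{-1}$ of $z_k$. The next step is to invoke a Newton-type contraction applied to the $C^1$ map $v \mapsto \Pi_t(z_k + v)$ on $V_k$, whose derivative at $0$ is the injective linear map $A_k := J_{z_k}\Pi_t|_{V_k}: V_k \to W_k$ satisfying $\|A_k^{-1}\| \leq 1/(c\|J_{z_k}\Pi_t\|)$ by \eqref{eccentric}. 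The constant rank hypothesis ensures that $\Pi_t(B(z_k, c_k^{-1}))$ is locally an $m$-dimensional submanifold tangent to $W_k + \Pi_t(z_k)$, hence a graph over $W_k$, which is what makes the projected Newton step well-posed (matching $W_k$-components is the same as matching full values). The iteration converges to a unique $v \in V_k$ with $\Pi_t(z_k + v) = u$; its initial correction has size at most $\|A_k^{-1}\|\cdot \eps c_k^{-1} \leq 2\eps c_k^{-1}/(c\|J_z\Pi_t\|) < c_k^{-1}/4$ by \eqref{epssmall}, and subsequent corrections form a geometric series with ratio of the same order, so the total correction is bounded by $c_k^{-1}/2$. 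Therefore $y := z_k + v$ lies in $B(z_k, 2c_k^{-1})$ as required.

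The main technical obstacle is the quantitative bookkeeping: verifying that the Newton iteration remains inside $B(z_k, 2c_k^{-1})$ and that every target $u \in \Pi_t(B(z_k, c_k^{-1}))$ is covered requires the gap between $\eps$ and $c\|J_z\Pi_t\|$ prescribed by \eqref{epssmall} to be compatible with the factor of $2$ in the radius; everything else is a routine consequence of the implicit function theorem applied in the setting of constant rank.
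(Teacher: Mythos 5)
Your argument is correct, but it is a genuinely different (and more quantitative) route than the paper's. The paper's proof works with the \emph{level set} $\Pi_t^{-1}(u)$: by the implicit function theorem (in constant-rank form), this is an $(n-m)$-dimensional $C^1$ manifold through a point of $B(z_k, c_k^{-1})$ whose tangent spaces are, by continuity of $J_{z'}\Pi_t$, uniformly within a tiny Grassmannian distance of $\ker(J_{z_k}\Pi_t) = V_k^\perp$; a soft transversality argument then shows that such a nearly-flat manifold crosses the transverse affine $m$-plane $V_k + z_k$ exactly once inside $B(z_k, 2c_k^{-1})$. Your proof instead works with the \emph{image}: you establish uniqueness by a direct FTC estimate (which is clean and correct, and in fact a nice quantitative sharpening of the paper's hand-wave), and existence by a Newton/contraction iteration for the map $v \mapsto \Pi_t(z_k + v)$ on $V_k$, invoking the rank theorem to identify $\Pi_t(B(z_k, 2c_k^{-1}))$ with a graph over $W_k := \operatorname{Im}(J_{z_k}\Pi_t)$ so that annihilating the $W_k$-component of the residual already forces the full equation $\Pi_t(z_k + v) = u$. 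The two arguments rely on the same underlying geometric input --- continuity of the Jacobian near $z$ and the non-degeneracy encoded in \eqref{eccentric} --- but your version produces explicit constants, which could be reused elsewhere. The one place where your write-up leans on an assertion worth making precise is the ``matching $W_k$-components is the same as matching full values'' step: this is indeed a consequence of the constant-rank theorem (the image of a sufficiently small neighbourhood of $z$ is a $C^1$ submanifold tangent to $W_k$ at $\Pi_t(z_k)$, hence a graph over $W_k$ once $c_k^{-1}$ is small and $B(z_k, 2c_k^{-1})$ lies in that neighbourhood), and without it the projected Newton step would not be justified. Spelling that out would make the proof fully self-contained; otherwise the argument is sound.
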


\begin{proof}
Throughout this proof we restrict $\Pi_t$ to a neighbourhood of $z$ such that it is $C^1$ and of constant rank $m$.  By the implicit function theorem, the level set $\Pi^{-1}_t(u)$ is a simply connected $(n-m)$-dimensional  $C^1$ manifold  which intersects  $B(z_k,c_k^{-1})$ since $u \in \Pi_t(B(z_k,c_k^{-1}))$. This  follows by expressing the action of $\Pi_t$ near $z$ in local coordinates.   Moreover, since $\Pi_t$ is differentiable,   vectors $v$ in the tangent space $T_x \Pi^{-1}_t(u)$ at  $x \in \Pi^{-1}_t(u)$ coincide with directional  derivatives of $\Pi_t$ at $x$ in direction $v$.   For the manifold $\Pi^{-1}_t(u) $ to intersect $(V_k+z_k) $ more than once, or not at all, inside $B(z_k,2c_k^{-1})$ we would require the tangent spaces  of $\Pi^{-1}_t(u)$ at points inside $B(z_k,2c_k^{-1})$ to differ from $\textup{ker}(J_{z_k}\Pi_t)$ by more than 1/100 (in the Grassmannian metric, say).  This is  impossible for large enough $k$ since  $\Pi_t$  is continuously differentiable in a neighbourhood of $z$.
\end{proof}

We will use the maps $U_k$ to show that $\pi_V(E) \subseteq B(0,1) \cap V$ is a   tangent to $\Pi_t(F)$.  Therefore we must show these maps satisfy the conditions from Definition \ref{weaktangentdef}.  Since $S_k$ is a homothety, it is sufficient to demonstrate that $U_k^0$ satisfies the conditions.  This is the content of the next lemma. Note that we only need to consider points which map into $B(0,1)$ under $U_k$, which is consistent with the domain of $U_k^0$ being $\Pi_t(B(z_k,c_k^{-1}))$.  We may extend $U_k^0$ (and thus $U_k$) to a mapping on the whole of $\rd$ if we wish, but this is not really necessary.

\begin{lma} \label{uk}
For sufficiently large $k$, for all $x,y \in \Pi_t S_k^{-1}(B(0,1)) = \Pi_t(B(z_k,c_k^{-1}))$
\[
\frac{1}{(2+c)\| J_{z}\Pi_t\|}  |x-y|  \leq |U_k^0(x)-U_k^0(y)|  \leq \frac{8}{c\| J_{z}\Pi_t\|}  |x-y| 
\]
where $c$ is the constant from \eqref{eccentric}.
\end{lma}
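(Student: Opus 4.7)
The plan is to reduce the bi-Lipschitz estimates for $U_k^0$ to the corresponding estimates for the linear map $J_{z_k}\Pi_t$ restricted to the subspace $V_k$, via a first-order Taylor expansion of $\Pi_t$ centred at $z_k$ (not at $z$). Given $x,y \in \Pi_t(B(z_k,c_k^{-1}))$, set $p = U_k^0(x)$ and $q = U_k^0(y)$; by construction $p,q \in (V_k+z_k)\cap B(z_k,2c_k^{-1})$, so that $p-q \in V_k$ and $\Pi_t(p)=x$, $\Pi_t(q)=y$.

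First I would write the exact Taylor-type identity
\[
\Pi_t(p) - \Pi_t(q) \;=\; J_{z_k}\Pi_t(p-q) \;+\; \int_0^1 \bigl[J_{q+\tau(p-q)}\Pi_t - J_{z_k}\Pi_t\bigr](p-q)\,d\tau,
\]
and control the remainder. Because $\Pi_t$ is $C^1$ in a fixed neighbourhood of $z$, $z_k \to z$, and $c_k\to\infty$, for all sufficiently large $k$ the integrand has operator norm at most the $\epsilon$ fixed in \eqref{epssmall} uniformly in $\tau$, so
\[
\bigl|\Pi_t(p)-\Pi_t(q) - J_{z_k}\Pi_t(p-q)\bigr| \leq \epsilon\,|p-q|.
\]

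Next I would apply the eccentricity estimate \eqref{eccentric} to the vector $p-q \in V_k$ to obtain $|J_{z_k}\Pi_t(p-q)| \geq c\|J_{z_k}\Pi_t\|\,|p-q|$, together with the trivial bound $|J_{z_k}\Pi_t(p-q)| \leq \|J_{z_k}\Pi_t\|\,|p-q|$. Combined with the remainder bound these yield
\[
\bigl(c\|J_{z_k}\Pi_t\|-\epsilon\bigr)\,|p-q| \;\leq\; |x-y| \;\leq\; \bigl(\|J_{z_k}\Pi_t\|+\epsilon\bigr)\,|p-q|.
\]
Continuity of $z'\mapsto\|J_{z'}\Pi_t\|$ near $z$ gives $\|J_{z_k}\Pi_t\|\to\|J_z\Pi_t\|$, and the inequality $\epsilon<(c/8)\|J_z\Pi_t\|$ from \eqref{epssmall} lets me absorb both $\epsilon$ and the discrepancy between $\|J_{z_k}\Pi_t\|$ and $\|J_z\Pi_t\|$ into the target constants: for large $k$ one obtains $c\|J_{z_k}\Pi_t\|-\epsilon\geq (c/8)\|J_z\Pi_t\|$ and $\|J_{z_k}\Pi_t\|+\epsilon\leq (2+c)\|J_z\Pi_t\|$ (the latter with considerable slack, since $c\leq 1$). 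Rearranging for $|p-q|=|U_k^0(x)-U_k^0(y)|$ yields both inequalities in the lemma.

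The main obstacle is ensuring uniform $\epsilon$-control of the Taylor remainder at the correct scale. The point is that expanding around the moving base-point $z_k$ (rather than the fixed focal point $z$) forces $p,q,z_k$ to all lie in a ball of radius $2c_k^{-1}\to 0$, so that the smallness of the remainder follows from uniform continuity of $J_\cdot\Pi_t$ on a fixed small neighbourhood of $z$, independent of $x$ and $y$. Once that is in place, everything else is a direct algebraic manipulation of linear estimates.
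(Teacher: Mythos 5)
Your proof is correct and follows essentially the same linearisation strategy as the paper: both arguments hinge on the observation that $U_k^0(x)-U_k^0(y)\in V_k$, the eccentricity bound \eqref{eccentric}, the identity $\Pi_t U_k^0=\mathrm{id}$, and the $C^1$-uniformity of $J_\cdot\Pi_t$ on the shrinking ball $B(z_k,2c_k^{-1})$. The one organisational difference is that you linearise $\Pi_t$ in a single step around $z_k$ via the integral mean value form, whereas the paper linearises around the moving base point $U_k^0(x)$ (its \eqref{totalderivative}) and then transfers from $J_{U_k^0(x)}\Pi_t$ to $J_{z_k}\Pi_t$ via the ratio bound \eqref{bound1}; your version avoids that extra ratio factor and is slightly cleaner, but it is the same argument in substance and yields the same constants.
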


\begin{proof}
Since $\Pi_t$ is continuously differentiable  in a neighbourhood of $z$, we may assume $k$ is large enough to ensure 
\begin{equation} \label{totalderivative} 
|\Pi_t(b) - \Pi_t(a) - (J_{a}\Pi_t) (b-a)| \leq \eps|b-a|
\end{equation}
for all $a,b \in B(z_k,c_k^{-1})$.  We may also assume $k$ is large enough to ensure 
\begin{equation} \label{unidiff}
(1/2)\| J_z\Pi_t \|  \leq \| J_a\Pi_t \| \leq 2\| J_z\Pi_t \| 
\end{equation}
for all $a \in B(z_k,c_k^{-1})$.  This  estimate can be achieved because $ J_z\Pi_t $ is  continuous at $z$ and $\|J_z\Pi_t\| >0$.  These facts are guaranteed since  $\Pi_t$ is continuously differentiable  in a neighbourhood of $z$ and $J_z\Pi_t$ has strictly positive rank, respectively.   Finally, we may assume $k$ is large enough to guarantee
\begin{equation} \label{bound1}
1/2 \leq  \frac{ |(J_{z_k}\Pi_t) (x-y)|}{ |(J_{x}\Pi_t) (x-y )|} \leq 2
\end{equation}
for all $x,y \in B(z_k, c_k^{-1}) \cap (V_k+z_k)$.  This can be achieved since $J_{z_k}\Pi_t$ is linear and  injective on $V_k$ and $J_{x}\Pi_t$ continuous in $x$ in a neighbourhood of $z$. In particular, $J_{z_k}\Pi_t \to J_{z}\Pi_t$.

 Fix distinct $x,y \in \Pi_t(B(z_k,c_k^{-1}))$.  Since $U_k^0(x) - U_k^0(y) \in V_k$, by  \eqref{eccentric} and \eqref{unidiff},
\begin{eqnarray}
 |(J_{z_k}\Pi_t) (U_k^0(x) - U_k^0(y) )| &  \geq &  c \| J_{z_k}\Pi_t\|  |U_k^0(x) - U_k^0(y) | \nonumber  \\ \nonumber \\
& \geq &  (c/2) \| J_{z}\Pi_t\|  |U_k^0(x) - U_k^0(y) |. \label{chain1}
\end{eqnarray}
Moreover, using the fact that $U_k^0$ is injective,
\begin{eqnarray}
&\,&  \hspace{-18mm}  |(J_{z_k}\Pi_t) (U_k^0(x) - U_k^0(y) )| \nonumber  \\ \nonumber \\
& = &  \frac{ |(J_{z_k}\Pi_t) (U_k^0(x) - U_k^0(y) )|}{ |(J_{U_k^0(x)}\Pi_t) (U_k^0(x) - U_k^0(y) )|} |(J_{U_k^0(x)}\Pi_t) (U_k^0(x) - U_k^0(y) )| \nonumber  \\ \nonumber \\
& \leq & 2|\Pi_tU_k^0(x)-\Pi_tU_k^0(y) | + 2\eps |U_k^0(x)-U_k^0(y)|  \qquad \text{by \eqref{totalderivative} and \eqref{bound1}} \nonumber \\ \nonumber  \\
&= & 2|x-y| + 2\eps |U_k^0(x)-U_k^0(y)|  \label{chain2}
\end{eqnarray}
since $\Pi_tU_k^0$ is the identity on $\Pi_t(B(z_k,c_k^{-1}))$. Combining \eqref{chain1} and \eqref{chain2} and using \eqref{epssmall} yields
\[
|U_k^0(x) - U_k^0(y) |  \leq \frac{2}{(c/2)\| J_{z}\Pi_t\| -2\eps} |x-y| \leq  \frac{8}{c\| J_{z}\Pi_t\|} |x-y|
\]
as required.  The lower bound is similar.  By  the  definition of the operator norm $\| \cdot \|$ and  \eqref{unidiff},
\begin{eqnarray}
 |(J_{z_k}\Pi_t) (U_k^0(x) - U_k^0(y)) | & \leq &   \| J_{z_k}\Pi_t\|   |U_k^0(x) - U_k^0(y) | \nonumber \\ \nonumber \\
 & \leq &   2\| J_{z}\Pi_t\|   |U_k^0(x) - U_k^0(y) |.   \label{chain3}
\end{eqnarray}
Moreover, using the fact that $U_k^0$ is injective,
\begin{eqnarray}
 &\,&  \hspace{-18mm} |(J_{z_k}\Pi_t) (U_k^0(x) - U_k^0(y) )|  \nonumber  \\ \nonumber \\
& = &  \frac{ |(J_{z_k}\Pi_t) (U_k^0(x) - U_k^0(y) )|}{ |(J_{U_k^0(x)}\Pi_t) (U_k^0(x) - U_k^0(y) )|} |(J_{U_k^0(x)}\Pi_t) (U_k^0(x) - U_k^0(y) )| \nonumber  \\ \nonumber \\
& \geq & (1/2)|\Pi_tU_k^0(x)-\Pi_tU_k^0(y) | -(\eps/2) |U_k^0(x)-U_k^0(y)|  \qquad \text{by \eqref{totalderivative} and \eqref{bound1}} \nonumber \\ \nonumber  \\
&= & (1/2)|x-y | - (\eps/2) |U_k^0(x)-U_k^0(y)|  \label{chain4}
\end{eqnarray}
since $\Pi_tU_k^0$ is the identity on $\Pi_t(B(z_k,c_k^{-1}))$. Combining \eqref{chain3} and \eqref{chain4} yields
\[
|U_k^0(x) - U_k^0(y) |  \geq \frac{1/2}{\| J_{z}\Pi_t\| +\eps/2} |x-y| \geq  \frac{1}{(2+c)\| J_{z}\Pi_t\|} |x-y|
\]
as required.  
\end{proof}

The next result is a technical approximation which says that close to $z_k$ the composition $U_k^0 \Pi_t$ behaves very much like orthogonal projection onto $V+z_k$.

\begin{lma} \label{lma1}
For sufficiently large $k \geq 1$,
\[
\sup_{w \in B(z_k, c_k^{-1})} |S^{-1}_k \pi_V  S_k (w) - U_k^0 \Pi_t(w)| \leq 2 c_k^{-1} \eps.
\]
\end{lma}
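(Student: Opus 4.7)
The plan is to decompose the difference via an auxiliary point lying in the affine plane $V_k + z_k$ (the plane where $U_k^0$ actually lives), so that the desired estimate splits into a Grassmannian-closeness piece and a nonlinear piece governed by \eqref{totalderivative} and \eqref{eccentric}. Since $S_k(z_k)=0$ we have $S_k(x) = c_k(x-z_k)$, hence $p := S_k^{-1}\pi_V S_k(w) = z_k + \pi_V(w-z_k)$, which is the orthogonal projection of $w$ onto the affine plane $V + z_k$. Writing $q := U_k^0 \Pi_t(w) \in V_k + z_k$, I would introduce the auxiliary point $\tilde p := z_k + \pi_{V_k}(w - z_k) \in V_k + z_k$, the orthogonal projection of $w$ onto $V_k + z_k$, and bound $|p-\tilde p|$ and $|\tilde p - q|$ separately via the triangle inequality.

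For the first piece, $|p-\tilde p| = |(\pi_V - \pi_{V_k})(w-z_k)|$; since $V_k \to V$ in the Grassmannian metric (as noted before \eqref{eccentric}), for $k$ large enough $\|\pi_V - \pi_{V_k}\| \leq \eps$, giving $|p-\tilde p| \leq \eps c_k^{-1}$. For the second piece, apply \eqref{totalderivative} at the base point $z_k$ to both $w$ and $\tilde p$; because $\tilde p - w = -\pi_{V_k^\perp}(w-z_k) \in V_k^\perp = \textup{ker}(J_{z_k}\Pi_t)$, the linear terms annihilate, and subtraction yields $|\Pi_t(\tilde p) - \Pi_t(w)| \leq 2\eps c_k^{-1}$. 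Using $\Pi_t(q) = \Pi_t(w)$ (the defining property of $U_k^0$), this gives $|\Pi_t(\tilde p) - \Pi_t(q)| \leq 2\eps c_k^{-1}$. Applying \eqref{totalderivative} at $z_k$ to $\tilde p$ and $q$ (both of which lie in $(V_k + z_k) \cap B(z_k, 2c_k^{-1})$), together with $\tilde p - q \in V_k$ and the eccentricity estimate \eqref{eccentric}, produces the lower bound
\[
|\Pi_t(\tilde p) - \Pi_t(q)| \;\geq\; c\|J_{z_k}\Pi_t\| \,|\tilde p - q| \;-\; O(\eps c_k^{-1}).
\]
Comparing with the upper bound and invoking \eqref{unidiff} and \eqref{epssmall} to ensure the coefficient of $|\tilde p - q|$ is bounded below by a positive multiple of $\|J_z\Pi_t\|$, this yields $|\tilde p - q| \leq C \eps c_k^{-1}$ for a constant $C$ depending only on $c$ and $\|J_z\Pi_t\|$; the triangle inequality then delivers the claimed bound, with the factor $2$ coming out after choosing $\eps$ small enough relative to $c\|J_z\Pi_t\|$, as prepared in \eqref{epssmall}.

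The main obstacle is precisely the bookkeeping needed to end up with a bound genuinely linear in $\eps$ with a clean constant in front: the eccentricity factor $c$ and the Jacobian norm $\|J_z\Pi_t\|$ are fixed once $z$ and $t$ are fixed, so the nonlinear remainder in \eqref{totalderivative} must be absorbed into the linear lower bound extracted from \eqref{eccentric}, which is exactly why \eqref{epssmall} was imposed earlier. All the estimates above are uniform in $w \in B(z_k, c_k^{-1})$, so the supremum bound follows for $k$ sufficiently large.
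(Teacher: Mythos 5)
Your decomposition is exactly the paper's: introduce $\tilde p = z_k + \pi_{V_k}(w-z_k)$, split by the triangle inequality, and control $|p - \tilde p|$ via the Grassmannian convergence $V_k \to V$. Where you diverge is in bounding $|\tilde p - q|$: the paper argues geometrically (the tangent spaces of the level set manifold $\Pi_t^{-1}(u)$ lie within $\eps$ of $V_k^\perp$, so the manifold is nearly parallel to $w + V_k^\perp$ inside $B(z_k, 2c_k^{-1})$, and its intersection point $q$ with $V_k + z_k$ must therefore be within $\eps c_k^{-1}$ of $\tilde p$), whereas you argue analytically, feeding the Taylor remainder estimate \eqref{totalderivative} into the eccentricity lower bound \eqref{eccentric}. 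Your analytic route is a legitimate alternative and is arguably more self-contained, as it uses only the displayed estimates already set up.

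However, there is a genuine gap in the constant. Tracking your estimate: you obtain $|\Pi_t(\tilde p) - \Pi_t(q)| \leq 2\eps c_k^{-1}$, then \eqref{totalderivative} applied to $\tilde p$ and $q$ produces $|(J_{z_k}\Pi_t)(\tilde p - q)| \leq 5\eps c_k^{-1}$ (say), and \eqref{eccentric} together with \eqref{unidiff} gives
\[
|\tilde p - q| \;\leq\; \frac{10\,\eps\, c_k^{-1}}{c\,\|J_z\Pi_t\|}.
\]
The coefficient $10/(c\|J_z\Pi_t\|)$ is a fixed number determined by $z$ and $t$, and there is no reason it should be $\leq 1$; if $c\|J_z\Pi_t\|$ is small it can be enormous. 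Adding the first piece gives $\bigl(1 + 10/(c\|J_z\Pi_t\|)\bigr)\eps c_k^{-1}$, not $2\eps c_k^{-1}$. Your suggestion that \eqref{epssmall} rescues this is incorrect: \eqref{epssmall} bounds $\eps$ itself from above and does nothing to shrink the multiplicative coefficient in front of $\eps$. (A second, smaller issue: you apply \eqref{totalderivative} at $b = q$, but $q$ is only guaranteed to lie in $B(z_k, 2c_k^{-1})$ while \eqref{totalderivative} is stated for $a,b \in B(z_k, c_k^{-1})$; you would need to note that $q$ is eventually within $c_k^{-1}$ of $z_k$, or extend \eqref{totalderivative} to the larger ball.) In practice your weaker conclusion with an unspecified constant $C(z,t)$ would still suffice to prove Proposition \ref{keylemma}, since one only needs $\rho_\mathcal{H}(\pi_V(E), U_k\Pi_t F) \lesssim \eps$ with $\eps \to 0$, but as written your argument does not establish Lemma \ref{lma1} with the stated constant $2$, and the justification offered for that constant does not hold up.
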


\begin{proof}
Let $w \in B(z_k, c_k^{-1})$ and write $u = \Pi_t(w)$.  Then $U_k^0 \Pi_t(w) = \Pi_t^{-1}(u) \cap (V_k+z_k) \cap B(z_k,2c_k^{-1})$.  For sufficiently large $k$, the tangent spaces of  the manifold $\Pi_t^{-1}(u)$ are in an $\eps$-neighbourhood of   $\textup{ker}(J_{z_k}\Pi_t) = V_k^\perp$ (in the Grassmannian metric) and since $|w-z_k| \leq c_k^{-1}$ we conclude that
\[
|U_k^0 \Pi_t(w) - \pi_{V_k}(w-z_k)-z_k| \leq \eps c_k^{-1}
\]
for large enough $k$.  Moreover, since $V_k \to V$ in $d_G$, for sufficiently large $k$ we have
\[
|\pi_{V}(w-z_k)+z_k - \pi_{V_k}(w-z_k)-z_k)| \leq 2d_G(V_k, V) |w-z_k| \leq \eps c_k^{-1}.
\]
Finally, $\pi_{V}(w-z_k)+z_k = S^{-1}_k \pi_V  S_k (w)$ and the result follows.
\end{proof}

Next we provide a pair of simple algebraic  identities.

\begin{lma} \label{algebra}
For all integers $k$  and  all $w  \in \rd$
\begin{equation} \label{alg1}
S_k\pi_VS_k^{-1}(w) = \pi_V(w) -\pi_V(t_k)+t_k
\end{equation}
and
\begin{equation} \label{alg2}
S_k \pi_V  (w) +\pi_V (t_k)-2t_k    = c_k  S^{-1}_k \pi_V  S_k  (w) .
\end{equation}
\end{lma}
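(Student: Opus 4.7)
The lemma is a purely algebraic statement about how an orthogonal projection interacts with a homothety, so the plan is simply to unfold the definitions and rely on the linearity of $\pi_V$. Recall that $S_k$ is a homothety with ratio $c_k > 0$ and translation $t_k \in \mathbb{R}^n$, so explicitly $S_k(x) = c_k x + t_k$ and $S_k^{-1}(w) = c_k^{-1}(w - t_k)$; these will be the only properties of $S_k$ used.

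For identity \eqref{alg1}, the plan is to apply $S_k^{-1}$ first, then use the linearity of $\pi_V$ to split the action of the projection over the sum $w - t_k$, and finally apply $S_k$ and simplify. The factor $c_k^{-1}$ introduced by $S_k^{-1}$ cancels with the factor $c_k$ from $S_k$, leaving $\pi_V(w) - \pi_V(t_k) + t_k$ as desired. No geometric content is needed beyond the fact that $\pi_V$ commutes with scalar multiplication and is additive.

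For identity \eqref{alg2}, the plan is to compute the right-hand side in the same spirit: apply $S_k$ inside the projection, use linearity to write $\pi_V(c_k w + t_k) = c_k \pi_V(w) + \pi_V(t_k)$, and then apply $S_k^{-1}$, yielding $c_k \pi_V(w) + \pi_V(t_k) - t_k$. A direct expansion of the left-hand side $S_k \pi_V(w) + \pi_V(t_k) - 2t_k = c_k\pi_V(w) + t_k + \pi_V(t_k) - 2t_k$ gives the same expression, establishing the identity.

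There is no real obstacle here; the statement is included purely as a bookkeeping tool to be used in the subsequent argument, where one needs to relate the rescaled orthogonal projection $S_k^{-1}\pi_V S_k$ (which appeared naturally in Lemma \ref{lma1}) to the projection $\pi_V$ acting on the blown-up configuration. The only mild subtlety worth flagging in the write-up is to emphasise that $\pi_V$ is linear (not merely affine), so that $\pi_V(w - t_k) = \pi_V(w) - \pi_V(t_k)$; everything else is mechanical.
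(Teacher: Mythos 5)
Your proposal is correct and follows essentially the same approach as the paper: the paper's one-line proof states that the identities follow from applying the definition of $S_k$ and the fact that linear homotheties and orthogonal projections commute, which is precisely the unfolding-and-linearity computation you carry out. You merely write out explicitly what the paper leaves as an immediate verification.
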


\begin{proof}
These identities follow immediately by applying the definition of $S_k$ and using the fact that linear homotheties and orthogonal projections commute. 
\end{proof}

We are now ready to prove Proposition \ref{keylemma}

\begin{proof}
Fix $x \in \pi_V(E)$.  Choose $k$ large enough to  guarantee that the conclusion of Lemma \ref{lma1} holds and also  that
\begin{equation} \label{tangentclose}
\rho_\mathcal{H} (E, S_k(F)) \leq  \eps/2.
\end{equation}
Choose $y \in  S_k(F) \cap B(0,1)$ such that
\begin{equation} \label{yclose}
|x - \pi_V(y) | \leq \eps
\end{equation}
which we may do by first applying \eqref{tangentclose} and then the fact that orthogonal projections do not increase distances. Then
\begin{eqnarray*}
| x - U_k \Pi_t S_k^{-1}(y) | & = & | x - S_k U_k^0 \Pi_t S_k^{-1}(y) |   \\ \\
& \leq  & | x - S_k \pi_V  S_k^{-1} (y) -\pi_V (t_k)+t_k |  \\ \\
&\, & \qquad + \  | S_k \pi_V  S_k^{-1} (y) +\pi_V (t_k)-t_k   - S_k U_k^0 \Pi_t S_k^{-1}(y) |   \\ \\
& =  & | x -   \pi_V   (y)  |  \hspace{8cm}  \text{by \eqref{alg1}}   \\ \\
&\, & \qquad + \  | S_k \pi_V  S_k^{-1} (y) +\pi_V (t_k)-2t_k    - c_k U_k^0 \Pi_t S_k^{-1}(y)|   \\ \\
& =  & | x -   \pi_V   (y)  |  \ + \ c_k | S^{-1}_k \pi_V  S_k  S_k^{-1} (y)    -  U_k^0 \Pi_t S_k^{-1}(y) |  \qquad \text{by \eqref{alg2}}   \\ \\
& \leq   & \eps  \ + \  c_k ( 2 c_k^{-1} \eps)
\end{eqnarray*}
by \eqref{yclose} and Lemma \ref{lma1}. Since $S_k^{-1}(y) \in F \cap B(z_k, c_k^{-1}) \subseteq F$, we have proved that, for all sufficiently large $k$,
\[
\rho_\mathcal{H}( \pi_V  (E), U_k \Pi_t F) \leq 3 \eps.
\]
Since, by Lemma \ref{uk},  $U_k$ satisfies the conditions required in Definition \ref{weaktangentdef} for sufficiently large $k$, it follows that  $ \pi_V  (E)$ is a   tangent to $\Pi_t(F)$, completing the proof.
\end{proof}

\subsection{Proof of Theorem \ref{thmmain}} \label{proof1}

Theorem \ref{thmmain} follows succinctly from Proposition  \ref{keylemma}.   First suppose $F$ is closed.  Apply Theorem \ref{goodweaktangent} to obtain a  simple   tangent $E$ with  focal point $z \in F$ satisfying $\hd E = \ad F$.   Proposition \ref{keylemma},  the differentiability assumption in Definition \ref{defproj}, and Theorem \ref{weaktangent} imply that for $\mu$ almost all $t \in \Omega$
\[
\ad \Pi_t(F) \geq \ad \pi_{V(t)}(E) 
\]
for $V(t) = T_z(t)  = \textup{ker}(J_{z} \Pi_t)^\perp  \in G(n,m)$.    Since
\[
\ad \pi_{V}(E) \geq \underset{V \sim \mathbb{P}}{\textup{essinf}} \  \ad \pi_V (E)
\]
 for $\mathbb{P}$ almost all $V \in G(n,m)$ and $\mu \circ T_z^{-1} \ll \mathbb{P}$ (the absolute continuity assumption in Definition \ref{defproj}), it follows that
\[
\ad \pi_{V(t)}(E) \geq  \underset{V \sim \mathbb{P}}{\textup{essinf}} \  \ad \pi_V (E)
\]
holds for $\mu$ almost all  $t \in \Omega$.  Therefore, since $\hd E = \ad F$,
\[
\ad \Pi_t(F) \geq  \inf_{\substack{E \subseteq \rd :\\ \hd E = \ad F}}
    \underset{V \sim \mathbb{P} }{\textup{essinf}} \  \ad \pi_V (E) 
\]
holds for $\mu$ almost all  $t \in \Omega$, proving the theorem for closed $F$.  However, if $F$ is not closed, then $\overline{\Pi_t(F)} \supseteq \Pi_t(\overline{F})$ since $\Pi_t$ is continuous.  Therefore, since Assouad dimension is stable under taking closure,
\[
\ad \Pi_t(F) = \ad \overline{\Pi_t(F)} \geq  \ad \Pi_t(\overline{F}) 
\]
and the desired result follows by applying the result for closed sets.

\section{Proof of Theorem \ref{distmain}} \label{distproof}

A key step in the proof of Theorem \ref{distmain} will be to relate pinned distance sets and radial projections via radial product sets.  Given $X \subseteq S^{n-1}$ and $Y \subseteq \mathbb{R}$, we define the \emph{radial product} of $X$ and $Y$ to be the set
\[
X \otimes Y = \{ xy : x \in X, y \in Y\} \subseteq \rd.
\]
The following is more general than we need.  We write $\ubd$ for the upper box dimension and note that for bounded sets $E \subseteq \rd$
\[
\hd E \leq \ubd E \leq \ad E.
\]
For concreteness, the upper box dimension of a bounded set $E$ is the infimum of $\alpha>0$ such that there is a constant $C\geq 1$ such that, for all $r>0$, $E$ may be covered by fewer than $Cr^{-\alpha}$ sets of diameter $r$.
\begin{lma} \label{radprod}
For  $X \subseteq S^{n-1}$ and bounded $Y \subseteq \mathbb{R}$,
\[
\hd (X \otimes Y) \leq \hd X + \ubd Y.
\]
\end{lma}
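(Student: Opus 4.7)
The plan is to realise $X \otimes Y$ as a Lipschitz image of the Cartesian product $X \times Y$ and then invoke the classical mixed Hausdorff/upper-box product inequality. Concretely, define $\phi : S^{n-1} \times \mathbb{R} \to \rd$ by $\phi(x,y) = xy$. Then $X \otimes Y = \phi(X \times Y)$, so it suffices to show $\phi$ is Lipschitz on $X \times Y$ (equipping the product with, say, the sup metric) and that
\[
\hd(X \times Y) \le \hd X + \ubd Y.
\]

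For the Lipschitz bound, let $M = \sup_{y \in Y} |y| < \infty$ (finite since $Y$ is bounded) and estimate, for $(x_1,y_1), (x_2,y_2) \in X \times Y$,
\[
|x_1 y_1 - x_2 y_2| \le |x_1|\,|y_1 - y_2| + |y_2|\,|x_1 - x_2| \le |y_1 - y_2| + M|x_1 - x_2|,
\]
using $|x_i| = 1$. This gives a Lipschitz constant $\max\{1, M\}$. Since Lipschitz maps do not increase Hausdorff dimension, $\hd(X \otimes Y) \le \hd(X \times Y)$.

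The inequality $\hd(X \times Y) \le \hd X + \ubd Y$ is a standard result; I would simply cite it (for instance from Falconer's or Mattila's book). If one wished to reproduce the argument briefly, the idea is: take any $s > \hd X$ and $t > \ubd Y$, then for each $\delta > 0$ cover $X$ by balls $B_i$ of radius $r_i < \delta$ with $\sum r_i^s$ small, and cover $Y$ by $\lesssim \delta^{-t}$ balls of radius $\delta$; the products of these covers give a covering of $X \times Y$ whose $(s+t)$-dimensional Hausdorff premeasure is controlled by $\sum r_i^s \cdot (\text{const})$, which is small. This shows $\hd(X \times Y) \le s + t$, and letting $s \downarrow \hd X$ and $t \downarrow \ubd Y$ yields the claim.

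There is no real obstacle here; the only subtlety is remembering to use upper box dimension rather than Hausdorff dimension for $Y$ (the analogous inequality with $\hd Y$ on the right-hand side is false in general), which is exactly why the lemma is stated with $\ubd Y$. Combining the Lipschitz reduction with the classical product inequality gives $\hd(X \otimes Y) \le \hd X + \ubd Y$, as required.
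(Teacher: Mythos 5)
Your proof is correct. You take a different, more modular route than the paper. The paper proves the bound by a direct covering argument on $X \otimes Y$ itself: it fixes a Hausdorff cover $\{U_i\}$ of $X$ with $\sum_i |U_i|^s$ small, forms the wedges $W_i = \{xy : x \in X \cap U_i, \ y \in Y\}$, and covers each $W_i$ by $\lesssim |U_i|^{-t}$ sets of diameter comparable to $|U_i|$, combining these into a cover of $X \otimes Y$. You instead factor the problem: write $X \otimes Y = \phi(X\times Y)$ with $\phi(x,y)=xy$, verify $\phi$ is Lipschitz on $X\times Y$ (using $|x_1y_1 - x_2y_2| \le |y_1-y_2| + M|x_1-x_2|$ with $M=\sup_Y|y|$ finite by boundedness of $Y$ and $|x_i|=1$ on the sphere), and then cite the classical mixed product inequality $\hd(X\times Y)\le \hd X+\ubd Y$ (Falconer, \emph{Fractal Geometry}, Corollary 7.4, or Mattila). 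The same Lipschitz estimate is actually the implicit geometric ingredient in the paper's bound on the diameter of the wedge pieces; you have merely made it explicit and then outsourced the covering bookkeeping to a standard theorem. Your approach is shorter and cleaner if the reader already has the product formula at hand, while the paper's self-contained version has the minor advantage of not depending on an external reference. You also correctly flag that $\ubd Y$ rather than $\hd Y$ is essential here, which is the one real subtlety.
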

\begin{proof}
This is straightforward but we include the details due to its importance. Fix $s>\hd X$ and $t>\ubd Y$.  Let $\eps>0$, $\delta>0$ and  $\{U_i\}_i$ be a finite or countable  $\delta$-cover of $X$ such that
\[
\sum_i |U_i|^s \leq \eps.
\]
Consider the `wedge' $W_i = \{xy : x \in X \cap U_i,  y \in Y\}$.  By the definition of upper box dimension, there exists a uniform constant $C \geq 1$ such that $W_i$ may be covered by fewer than $C|U_i|^{-t}$  sets of diameter $|U_i|$.  Taking the union of these covers over all $i$ yields a $\delta$-cover $\{V_j\}_j$ of $X \otimes Y$ satisfying
\[
\sum_j |V_j|^{s+t} \leq  \sum_i |U_i|^{s+t} C|U_i|^{-t} \leq C \eps
\]
which proves that $\hd (X \otimes Y) \leq s+t$, and thus the lemma.
\end{proof}

It is immediate that for all sets $E \subseteq \rd$ and $z \in \rd$
\begin{equation} \label{radrel}
E \subseteq (\pi_z(E)-z) \otimes D_z(E)+z.
\end{equation} 
Indeed, for $x \in E$
\[
(\pi_z(x)-z) \otimes D_z(x)+z = x.
\]
Therefore Lemma \ref{radprod} yields
\begin{equation} \label{radkey}
\hd E \leq \hd \pi_z(E) + \ubd  D_z(E).
\end{equation}
We are now ready to prove Theorem \ref{distmain}.
\begin{proof}
It was proved in \cite[Lemma 3.1]{fraserdist} that if $F \subseteq \rd$ is a closed set and $E$ a simple tangent to $F$, then
\[
\ad D(F) \geq \ad D(E).
\]
Therefore it is sufficient to work with tangents of $F$.  Assume for now that $F$ is closed and apply Theorem \ref{goodweaktangent} to obtain a compact simple  tangent $E$ to $F$ with
\[
\hd E = \ad F.
\]
Apply Theorem \ref{goodweaktangent} a second time  to obtain a compact simple  tangent $E'$ to $E$ with
\[
\hd E'  = \ad E = \ad F
\]
and let $z \in E$ be the focal point of $E'$.    Let $\mathcal{E} \subseteq G(n,1)$ be the set of exceptions to  \eqref{assouadprojection}  applied to $E'$.  By definition $\mathcal{E}$ has  Hausdorff dimension at most $\theta=\theta(\ad F,n,1)$, recall \eqref{eta}. We now split into two cases.
\\ \\ \newpage
Case 1: Suppose $\hd \pi_z(E) >\theta$.  Since $\hd \mathcal{E} \leq \theta$, there must exist $x \in E$ such that $\textup{span}(z-x) \notin \mathcal{E}$.  Proposition \ref{keylemma} implies that $\pi_{\textup{span}(z-x)}(E')$ is a tangent to $D_z(E)$.  Therefore, applying Theorem \ref{weaktangent},
\begin{eqnarray*}
\ad D(F) \geq \ad D(E) \geq \ad D_z(E) \geq \ad \pi_{\textup{span}(z-x)}(E') &\geq& \min\{\ad E', \, 1\} \\ 
&=& \min\{\ad F, 1\}.
\end{eqnarray*}
Case 2: Suppose $\hd \pi_z(E) \leq \theta$.  It follows from \eqref{radkey} that
\[
\hd E \leq \hd \pi_z(E) + \ubd  D_z(E) \leq \theta + \ad D_z(E)
\]
and therefore
\[
\ad D(F) \geq \ad D(E) \geq \ad D_z(E) \geq \hd E - \theta = \ad F - \theta.
\]
Therefore we have proved the desired result for closed sets $F$.   If $F$ is not closed,  then $\overline{D(F)} \supseteq D(\overline{F})$ since the  map from $\rd \times \rd$ to $\mathbb{R}$ defined by $(x,y) \mapsto |x-y|$ is continuous.  Therefore, since Assouad dimension is stable under taking closure,
\[
\ad D(F) = \ad \overline{D(F)} \geq  \ad D(\overline{F}) 
\]
and the desired result follows by applying the result for closed sets.
\end{proof}

\subsection{Extension to   general  norms}

The proof given in Section \ref{distproof} goes through almost verbatim if the distance set is defined via a general norm $\| \cdot \|$ with the property that the boundary of the unit ball $\partial B$ is a $C^1$ manifold and the associated Gauss map cannot decrease Hausdorff dimension, see Section \ref{norm}. In the definition of radial product, $S^{n-1}$ is replaced by $\partial B$ and then \eqref{radrel} holds with the radial projection and pinned distance maps taken with respect to the norm $\| \cdot \|$.   The proof of \cite[Lemma 3.1]{fraserdist} goes through almost unchanged in the setting of general norms and therefore we can reduce to tangents $E$ and $E'$ in exactly the same way. Finally,  writing   $\Pi_t^{\| \cdot \|}: \rd \to \mathbb{R}$ for  the pinned distance map with respect to  $\| \cdot \|$,  the case 1 assumption $\hd \pi_z(E) >\theta$ still guarantees existence of $x \in E$ such that $T_z(x) =  \textup{ker}(J_z \Pi_x^{\| \cdot \|})  \notin \mathcal{E}$.  This is because the restriction of $T_z$ to  $(\partial B+z)$   coincides with the Gauss map $g : (\partial B+z) \to S^{n-1}$  (upon identification of antipodal points in $S^{n-1}$ and then identification  with $G(n,1)$)  and we assume the Gauss map cannot decrease Hausdorff dimension.

\section{Proof of Theorem \ref{sumset}} \label{sumsetproof}

Apply Theorem \ref{goodweaktangent} to obtain a simple tangent $E'$ to $E$  with $\hd E'  = \ad E$ and let  $z \in E$ be the focal point of $E'$.   It is straightforward to see that $F_\lambda$   is itself a simple tangent to $F_\lambda$ with focal point 0.  Therefore $E' \times F_\lambda$ is a simple tangent to $E \times F_\lambda$ with focal point $(z,0)$ for all $\lambda \in (0,1/2)$.  Let $V = \textup{span}(1,1) \in G(2,1)$.  It follows from Proposition \ref{keylemma} that $\pi_V (E' \times F_\lambda)$ is a tangent to $\pi_V(E \times F_\lambda)$ and therefore, by Theorem \ref{weaktangent},
\[
\ad (E + F_\lambda) = \ad \pi_V(E \times F_\lambda) \geq \hd \pi_V (E' \times F_\lambda) = \hd (E' + F_\lambda) 
\]
for all $\lambda \in (0,1/2)$. It follows from \cite[Theorem 5.12]{peresschlag} that
\[
\hd (E' + F_\lambda)  = \min\{\hd E'+\hd F_\lambda, \, 1\} =  \min\{\ad E+\ad F_\lambda, \, 1\}
\]
for almost all $\lambda \in (0,1/2)$ and even all $\lambda \in (a,b) \subseteq (0,1/2)$ outside of a set of exceptions of Hausdorff dimension at most $\hd E'+\hd F_b = \ad E+\ad F_b$, completing the proof.

\begin{samepage}

\subsection*{Acknowledgements}

The  author was   supported by an \emph{EPSRC Standard Grant} (EP/R015104/1) and a  \emph{Leverhulme Trust Research Project Grant} (RPG-2019-034).

\end{samepage}

\vspace{10mm}

\begin{samepage}

\noindent \emph{Jonathan M. Fraser\\
School of Mathematics and Statistics\\
The University of St Andrews\\
St Andrews, KY16 9SS, Scotland} \\
\noindent  Email: jmf32@st-andrews.ac.uk\\ \\

\end{samepage}

\end{document}